\newtheorem{theorem}{Theorem}
\newtheorem{lemma}{Lemma}
\newtheorem{corollary}{Corollary}
\numberwithin{equation}{subsection}
 \DeclareMathOperator{\mes}{mes}
\begin{document}
\author{Gy\"orgy G\'at and Ushangi Goginava}
\title[Convergence of logarithmic means]{Convergence of logarithmic means of
Multiple Walsh-Fourier series}
\address{G. G\'at, Institute of Mathematics and Computer Science, College of
Ny\'\i regyh\'aza, P.O. Box 166, Ny\'\i regyh\'aza, H-4400 Hungary }
\email{gatgy@nyf.hu}
\address{U. Goginava, Department of Mathematics, Faculty of Exact and
Natural Sciences, Iv. Javakhishvili Tbilisi State University, Chavchavadze
str. 1, Tbilisi 0128, Georgia}
\email{zazagoginava@gmail.com}
\thanks{The research of U. Goginava was supported by Shota Rustaveli
National Science Foundation grant DI/9/5-100/13 (Function spaces, weighted
inequalities for integral operators and problems of summability of Fourier
series). The research of G. G\'at was supported by project
T\'AMOP-4.2.2.A-11/1/KONV-2012-0051}
\maketitle

\begin{abstract}
The maximal Orlicz spaces such that the mixed logarithmic means of multiple
Walsh-Fourier series for the functions from these spaces converge in measure
and in norm are found.
\end{abstract}

\footnotetext{%
2010 Mathematics Subject Classification 42A24 .
\par
Key words and phrases: double Fourier series, Orlicz space, Convergence in
norm and measure}

Let $\mathbb{I}^{d}:=[0,1)^{d}$ denote a cube in the $d$-dimensional
Euclieadan space $\mathbb{R}^{d}$. The elements of $\mathbb{R}^{d}$ are
denoted by $\mathbf{x:=}\left( x_{1},...,x_{d}\right) $.

Let $D:=\left\{ 1,2,...,d\right\} ,B:=\left\{ l_{1},l_{2},...,l_{r}\right\}
,1\leq r\leq d,B\subset M, l_{k}<l_{k+1},k=1,2,...,r-1,
B^{\prime}:=D\backslash B$. For any $\mathbf{x=}\left(
x_{1},...,x_{d}\right) $ and any $B\subset D$, denote $\mathbf{x}%
_{B}:=\left( x_{l_{1}},x_{l_{2}},...,x_{l_{r}}\right) \in \mathbb{R}^{r}$.
We assume that $|B|$ is the number of elements of $B$. If $B\neq \varnothing 
$, then for any natural numbers $n$ we suppose that $n\left( B\right)
:=\left( n,n,...,n\right) \in \mathbb{R}^{|B|}.$ The notation $a\lesssim b$
in the paper stands for $a\leq cb$, where $c$ is an absolute constant. For
any $\mathbf{x}=\left( x_{1},...,x_{d}\right) $ and $\mathbf{y}=\left(
y_{1},...,y_{d}\right) $ the vector $\left( x_{1}\overset{\cdot }{+}%
y_{1},...,x_{d}\overset{\cdot }{+}y_{d}\right) $ of the space $\mathbb{R}%
^{d} $ is denoted by $\mathbf{x}\overset{\cdot }{+}\mathbf{y}$, where $%
\overset{\cdot }{+}$ denotes the dyadic addition \cite{G-E-S,S-W-S}.

Below we identify the symbols%
\begin{equation*}
\sum\limits_{i_{B}=0_{B}}^{n_{B}}\text{ \ and }\sum%
\limits_{i_{l_{1}}=0}^{n_{l_{1}}}\cdots
\sum\limits_{i_{l_{r}}=0}^{n_{l_{r}}},d\mathbf{t}_{B}\text{ and }%
dt_{l_{1}}\cdots dt_{l_{r}}.
\end{equation*}

We denote by $L_{0}(\mathbb{I}^{d})$ the Lebesgue space of functions that
are measurable and finite almost everywhere on $\mathbb{I}^{d}$. mes$\left(
A\right) $ is the Lebesgue measure of the set $A\subset \mathbb{I}^{d}$.

We denote by $L_{p}\left( \mathbb{I}^{d}\right) $ the class of all
measurable functions $f$ that are $1$-periodic with respect to all variable
and satisfy%
\begin{equation*}
\left\Vert f\right\Vert _{p}:=\left( \int\limits_{\mathbb{I}%
^{d}}|f|^{p}\right) ^{1/p}<\infty .
\end{equation*}

The $weak-L_{1}\left( \mathbb{I}^{d}\right) $ space consist of all
measurable, $1$-periodic relative to each variables functions $f$ for which%
\begin{equation*}
\left\Vert f\right\Vert _{weak-L_{1}\left( \mathbb{I}^{d}\right)
}:=\sup\limits_{\lambda }\text{mes}\left\{ \mathbf{x}\in \mathbb{I}%
^{d}:\left\vert f\left( \mathbf{x}\right) \right\vert >\lambda \right\}
<\infty .
\end{equation*}

The Rademacher system is defined by 
\begin{equation*}
r_{n}\left( x\right) =r_{0}\left( 2^{n}x\right) ,\,\,\,\,\,\,\,n\geq 1%
\mbox{
\thinspace \thinspace \thinspace \thinspace \thinspace \thinspace
and\thinspace \thinspace \thinspace \thinspace \thinspace \thinspace }x\in 
\mathbb{I}^{1}.
\end{equation*}

Let $w_{0},w_{1},...\,\,$represent the Walsh functions, i.e. $w_{0}\left(
x\right) =1,\,\,$and $\,$if$\,\,n=2^{n_{1}}+\cdots +2^{n_{r}}\,$is a
positive integer with $n_{1}>n_{2}>\cdots >n_{r}\,\geq 0\,$then 
\begin{equation*}
w_{n}\left( x\right) =r_{n_{1}}\left( x\right) \cdots r_{n_{r}}\left(
x\right) .
\end{equation*}

The Walsh-Dirichlet kernel is defined by 
\begin{equation*}
D_{n}\left( x\right) =\sum\limits_{k=0}^{n-1}w_{k}\left( x\right) .
\end{equation*}

Recall that 
\begin{equation}
D_{2^{n}}\left( x\right) =\left\{ 
\begin{array}{c}
2^{n},\mbox{if }x\in \left[ 0,1/2^{n}\right) , \\ 
0,\,\,\,\mbox{if }x\in \left[ 1/2^{n},1\right) .%
\end{array}
\right.
\end{equation}

The rectangular partial sums of d-dimensional Walsh-Fourier series are
defined as follows:

\begin{equation*}
S_{N_{D}}(f;\mathbf{x})=\sum_{j_{D}=0_{D}}^{N_{D}-1_{D}}\widehat{f}\left(
j_{1},...,j_{d}\right) \prod\limits_{i=1}^{d}w_{j_{i}}(x_{i}),
\end{equation*}%
where the number 
\begin{equation*}
\widehat{f}\left( j_{1},...,j_{d}\right) =\int\limits_{\mathbb{I}%
^{d}}f\left( \mathbf{x}\right) \prod\limits_{i=1}^{d}w_{j_{i}}(x_{i})d%
\mathbf{x},
\end{equation*}%
is said to be the $\left( j_{1},...,j_{d}\right) $th Walsh-Fourier
coefficient of the function $f.$

In the literature, it is known the notion of the Riesz's logarithmic means
of a Fourier series. The $n$-th Riesz logarithmic mean of the Fourier series
of the integrable function $f$ is defined by 
\begin{equation*}
\frac{1}{l_{n}}\sum_{k=1}^{n-1}\frac{S_{k}(f)}{k},l_{n}:=\sum_{k=1}^{n-1}%
\frac{1}{k},
\end{equation*}%
where $S_{k}(f)$ is the $k$th partial sum of its Fourier series. This
Riesz's logarithmic means with respect to the trigonometric system has been
studied by a lot of authors. We mention for instance the papers of Szász,
and Yabuta \cite{sza, ya}. This mean is discussed first by Weisz \cite{we}
and then also by Simon and Gát \cite{Sim, gat} with respect to the Walsh,
Vilenkin system.

Let $\left\{ q_{k}:k\geq 0\right\} $ be a sequence of nonnegative numbers.
The Nörlund means for the Fourier series of $f$ are defined by 
\begin{equation*}
\frac{1}{\sum_{k=1}^{n-1}q_{k}}\sum_{k=1}^{n-1}q_{k}S_{n-k}(f).
\end{equation*}%
If $q_{k}=\frac{1}{k}$, then we get the (Nörlund) logarithmic means: 
\begin{equation}
L_{n}\left( f;x\right) :=\frac{1}{l_{n}}\sum_{k=1}^{n-1}\frac{S_{n-k}(f)}{k}.
\label{L_n}
\end{equation}%
Although, it is a kind of \textquotedblleft reverse\textquotedblright\
Riesz's logarithmic means. In \cite{gg} it is proved some convergence and
divergence properties of the logarithmic means of Walsh-Fourier series of
functions in the class of continuous functions, and in the Lebesgue space $L$%
. \ 

The Nörlund logarithmic and Reisz logarithmic means of multiple Fourier
series are defined by%
\begin{equation*}
L_{n_{D}}\left( f;\mathbf{x}\right) :=\frac{1}{\prod\limits_{i\in D}l_{i}}%
\sum\limits_{i_{D}=1_{D}}^{n_{D}-1_{D}}\frac{S_{n_{D}-i_{D}}\left( f;\mathbf{%
x}\right) }{\prod\limits_{j\in D}i_{j}},
\end{equation*}%
\begin{equation*}
R_{n_{D}}\left( f;\mathbf{x}\right) :=\frac{1}{\prod\limits_{i\in D}l_{i}}%
\sum\limits_{i_{D}=0_{D}}^{n_{D}-1_{D}}\frac{S_{i_{D}}\left( f;\mathbf{x}%
\right) }{\prod\limits_{j\in D}i_{j}},
\end{equation*}
where $n_D-i_D = (n_{l_1}-i_{l_1},\dots, n_{l_r}-i_{l_r})$. It is evident
that 
\begin{equation*}
L_{n_{D}}\left( f;\mathbf{x}\right) =\int\limits_{\mathbb{I}^{d}}f\left( 
\mathbf{t}\right) F_{n_{D}}\left( \mathbf{x}\overset{\cdot }{+}\mathbf{t}%
\right) d\mathbf{t}
\end{equation*}%
and%
\begin{equation*}
R_{n_{D}}\left( f;\mathbf{x}\right) =\int\limits_{\mathbb{I}^{d}}f\left( 
\mathbf{t}\right) G_{n_{D}}\left( \mathbf{x}\overset{\cdot }{+}\mathbf{t}%
\right) d\mathbf{t,}
\end{equation*}%
where 
\begin{equation*}
F_{n_{D}}\left( \mathbf{x}\right) :=\prod\limits_{j\in D}F_{n_{j}}\left(
x_{j}\right) ,G_{n_{D}}\left( \mathbf{x}\right) :=\prod\limits_{j\in
D}G_{n_{j}}\left( x_{j}\right) ,
\end{equation*}%
\begin{equation*}
F_{n}\left( u\right) :=\frac{1}{l_{n}}\sum\limits_{i=1}^{n-1}\frac{%
D_{n-i}\left( u\right) }{i},G_{n}\left( u\right) :=\frac{1}{l_{n}}%
\sum\limits_{i=1}^{n-1}\frac{D_{i}\left( u\right) }{i}.
\end{equation*}

Let $B\subset D$. Then the mixed logarithmic means of multiple Walsh-Fourier
series are defined by 
\begin{equation*}
\left( L_{n_{B}}\circ R_{n_{B^{\prime }}}\right) \left( f;\mathbf{x}\right)
:=\frac{1}{\prod\limits_{i\in D}l_{i}}\sum\limits_{i_{D}=1_{D}}^{n_{D}-1_{D}}%
\frac{S_{n_{B}-i_{B},i_{B^{\prime }}}\left( f;\mathbf{x}\right) }{%
\prod\limits_{j\in D}i_{j}}.
\end{equation*}%
It is easy to show that%
\begin{equation*}
\left( L_{n_{B}}\circ R_{n_{B^{\prime }}}\right) \left( f;\mathbf{x}\right)
=\int\limits_{\mathbb{I}^{d}}f\left( \mathbf{t}\right) F_{n_{B}}\left( 
\mathbf{x}_{B}\overset{\cdot }{+}\mathbf{t}_{B}\right) G_{n_{B^{\prime
}}}\left( \mathbf{x}_{B^{\prime }}\overset{\cdot }{+}\mathbf{t}_{B^{\prime
}}\right) d\mathbf{t.}
\end{equation*}

Let $L_{Q}=L_{Q}(\mathbb{I}^{d})$ be the Orlicz space \cite{KR} generated by
Young function $Q$, i.e. $Q$ is convex continuous even function such that $%
Q(0)=0$ and

\begin{equation*}
\lim\limits_{u\rightarrow +\infty }\frac{Q\left( u\right) }{u}=+\infty
,\,\,\,\,\lim\limits_{u\rightarrow 0}\frac{Q\left( u\right) }{u}=0.
\end{equation*}

This space is endowed with the norm 
\begin{equation*}
\Vert f\Vert _{L_{Q}(\mathbb{I}^{d})}=\inf \{k>0:\int\limits_{\mathbb{I}%
^{d}}Q(\left\vert f\right\vert /k)\leq 1\}.
\end{equation*}

In particular, if $Q(u)=u\log ^{\beta }(1+u)$ ,$u,\beta >0$, then the
corresponding space will be denoted by $L\log ^{\beta }L(\mathbb{I}^{d})$.

In the trigonometric case the rectangular partial sums of double Fourier
series $S_{n,m}\left( f;x,y\right) $ of the function $f\in L_{p}\left( 
\mathbb{T}^{2}\right) ,1<p<\infty ,\mathbb{T}:\mathbb{=[-\pi },\mathbb{\pi )}
$ converge in $L_{p}$ norm to the function $f$, as $n\rightarrow \infty $ 
\cite{Zh}. In the case $L_{1}\left( \mathbb{T}^{2}\right) $ this result does
not hold . But for $f\in L_{1}\left( \mathbb{T}\right) $, the operator $%
S_{n}\left( f;x\right) $ are of weak type (1,1) \cite{Zy}. This estimate
implies convergence of $S_{n}\left( f;x\right) $ in measure on $\mathbb{T}$
to the function $f\in L_{1}\left( \mathbb{T}\right) $. However, for double
Fourier series this result does not hold \cite{Ge,Kon,Tk1}. \ Moreover, it
is proved that quadratical partial sums $S_{n,n}\left( f;x,y\right) $ of
double Fourier series do not converge in two-dimensional measure on $\mathbb{%
T}^{2}$ \ even for functions from Orlicz spaces wider than Orlicz space $%
L\log L\left( \mathbb{T}^{2}\right) $. On the other hand, it is well-known
that if the function $f\in L\log L\left( \mathbb{T}^{2}\right) $, then
rectangular partial sums $S_{n,m}\left( f;x,y\right) $ converge in measure
on $\mathbb{T}^{2}$.

Classical regular summation methods often improve the convergence of Fourier
seeries. For instance, the Fejér means of the double Fourier series of the
function $f\in L_{1}\left( \mathbb{T}^{2}\right) $ converge in $L_{1}\left( 
\mathbb{T}^{2}\right) $ norm to the function $f$ \cite{Zh}. These means
present the particular case of the Nörlund means.

It is well know that the method of Nörlund logarithmic means of double
Fourier series, is weaker than the Cesáro method of any positive order. In 
\cite{GGT} it is proved, that these means of double Walsh-Fourier series in
general do not converge in two-dimensional measure on $\mathbb{I}^{d}$ even
for functions from Orlicz spaces wider than Orlicz space $L\log
^{d-1}L\left( \mathbb{I}^{d}\right) $. Thus, not all classic regular
summation methods can improve the convergence in measure of double Fourier
series.

The results for summability of logarithmic means of Walsh-Fourier series can
be found in \cite{GGT2,GogJAT, GGNSMH,gg,sza,ya}.

In this paper we consider the mixed logarithmic means $\left( L_{n_{B}}\circ
R_{n_{B^{\prime }}}\right) \left( f\right) $ of rectangular partial sums
multiple Walsh-Fourier series and prove that these means are acting from
space $L\log ^{|B|}L\left( \mathbb{I}^{d}\right) $ into space \ $L_{1}\left( 
\mathbb{I}^{d}\right) $ and from space $L\log ^{|B|-1}L\left( \mathbb{I}%
^{d}\right) $ into space $weak-L_{1}\left( \mathbb{I}^{d}\right) $. Thess
facts implies the convergence of mixed logarithmic means of rectangular
partial sums of multiple Fourier series converge in norm and in measure (see
Theorem \ref{measureconv}). We also prove sharpness of these results. In
particular, the following are true.

\begin{theorem}
\label{logest1}Let $B\subset D$ and $f\in L\log ^{|B|}L\left( \mathbb{I}%
^{d}\right) $. Then%
\begin{equation*}
\left\Vert \left( L_{n_{B}}\circ R_{n_{B^{\prime }}}\right) \left( f\right)
\right\Vert _{L_{1}\left( \mathbb{I}^{d}\right) }\lesssim 1+\left\Vert
\left\vert f\right\vert \log ^{|B|}\left\vert f\right\vert \right\Vert
_{L_{1}\left( \mathbb{I}^{d}\right) }.
\end{equation*}
\end{theorem}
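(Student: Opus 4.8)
The plan is to reduce everything to a one-dimensional kernel estimate and then invoke a Jensen/convexity argument variable by variable. First I would record the product structure of the kernel: since $(L_{n_B}\circ R_{n_{B'}})(f;\mathbf x)=\int_{\mathbb I^d}f(\mathbf t)\,F_{n_B}(\mathbf x_B\overset{\cdot}{+}\mathbf t_B)\,G_{n_{B'}}(\mathbf x_{B'}\overset{\cdot}{+}\mathbf t_{B'})\,d\mathbf t$, and both $F_{n_B}$ and $G_{n_{B'}}$ are products of one-dimensional kernels, the whole kernel factors as $\prod_{j\in B}F_{n_j}(x_j\overset{\cdot}{+}t_j)\prod_{j\in B'}G_{n_j}(x_j\overset{\cdot}{+}t_j)$. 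The two facts I would want to quote (or prove as lemmas, but they are standard in this circle of papers — G\'at, Goginava on logarithmic means) are: (i) the N\"orlund logarithmic kernel satisfies $\|F_n\|_{L_1(\mathbb I^1)}\lesssim 1$ uniformly in $n$, in fact $F_n$ behaves essentially like a nice summability kernel; and (ii) the Riesz logarithmic kernel $G_n$ is \emph{not} $L_1$-bounded but satisfies the logarithmic growth bound $\int_{\mathbb I^1}|G_n(u)|\,du\lesssim 1$ together with the pointwise/maximal estimate that $\sup_n|G_n(u)|\lesssim 1/u$ for $u\in(0,1)$, which is the source of the $\log$ loss. Concretely, $\int_{|u|>\delta}\sup_n|G_n(u)|\,du\lesssim \log(1/\delta)$, i.e.\ integrating the maximal Riesz kernel over the complement of a small interval costs one logarithm.

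The core estimate I would establish is therefore the pointwise bound, for $\mathbf x\in\mathbb I^d$,
\begin{equation*}
\bigl|(L_{n_B}\circ R_{n_{B'}})(f;\mathbf x)\bigr|\lesssim \int_{\mathbb I^d}|f(\mathbf t)|\;\prod_{j\in B'}\Bigl(\sup_{n}|G_n(x_j\overset{\cdot}{+}t_j)|\Bigr)\,\Bigl(\prod_{j\in B}\bigl|F_{n_j}(x_j\overset{\cdot}{+}t_j)\bigr|\Bigr)\,d\mathbf t ,
\end{equation*}
and then integrate in $\mathbf x$ over $\mathbb I^d$. Integrating first in the $B$-variables $x_j$, $j\in B$, each factor $\int_{\mathbb I^1}|F_{n_j}(x_j\overset{\cdot}{+}t_j)|\,dx_j\lesssim 1$ contributes a harmless constant by (i) and Fubini. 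What remains is $\int_{\mathbb I^{|B'|}}|f(\mathbf t)|\prod_{j\in B'}\bigl(\sup_n|G_n|\bigr)$-type integral in the $B'$ variables, which is bounded (again using $\|\sup_n G_n\|_{L_1}$ is \emph{not} finite, so one must be careful) — actually the clean way is to keep the $B$-variables of $f$ fixed, apply the one-dimensional weak-type / $L\log L$ theory of the Riesz means variable by variable in the $B'$-block, which produces no logarithmic loss there, and then handle the $B$-block. Wait — the logarithm count $|B|$ attaches to the N\"orlund block, so I have the roles reversed; let me state the division of labor correctly: the N\"orlund logarithmic kernel $F_n$ is the one that is only of ``$L\log L\to L_1$'' quality and each of the $|B|$ copies costs one logarithm, while the Riesz kernel $G_n$, after maximizing, is genuinely $L_1$-bounded in each variable so the $B'$-block is free. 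Thus I would: fix $\mathbf t_{B'}$ and $\mathbf x_{B'}$, bound the $B'$-integration by the constant $\prod_{j\in B'}\|G_{n_j}(\cdot)\|_1\lesssim 1$ after moving absolute values inside, reducing to the $|B|$-dimensional statement for the pure N\"orlund logarithmic mean $L_{n_B}$ acting on the ``$B$-section'' of $|f|$; then prove that $|B|$-dimensional statement by iterating the one-dimensional bound $\|L_n(g)\|_{L_1(\mathbb I^1)}\lesssim 1+\||g|\log|g|\|_{L_1(\mathbb I^1)}$ (from \cite{gg}) across the $|B|$ coordinates, using at each stage the elementary Young-type inequality $(a\log^{k}a)(\log b)\lesssim a\log^{k+1}a + \text{(lower order)}$ to absorb the logarithm spawned at each coordinate into one more power of $\log|f|$; Jensen's inequality applied to the convex function $u\mapsto u\log^j u$ over the remaining variables keeps the iteration honest.

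The main obstacle I anticipate is precisely this iteration: proving the one-dimensional $L\log L\to L_1$ bound for $L_n$ is the crux (it rests on the delicate decomposition of $F_n$ into a ``good'' part comparable to a Fej\'er-type kernel and a ``bad'' part controlled by $\sum 1/k$ weights times Dirichlet kernels, the latter handled through $D_{2^m}$ via \eqref{D_2n} and dyadic block analysis), and then lifting it to $|B|$ dimensions without losing more than $|B|$ logarithms requires care with the order of integration and with the sub-multiplicativity of $u\log^k u$. I would carry out the one-dimensional estimate first as a self-contained lemma (cross-referencing \cite{gg,GGT}), then do the multidimensional induction on $|B|$, and finally reattach the Riesz block at the very end where it is cheap. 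The constant ``$1+$'' on the right-hand side is just the contribution of the set where $|f|\le 1$ (or of the $L_1$ normalization of the kernels), nothing subtle.
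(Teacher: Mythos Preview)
After your mid-course correction the overall architecture matches the paper's: the Riesz block $R_{n_{B'}}$ is disposed of for free because $f\ast G_n$ is of strong type $(1,1)$ (so your initial claim (ii) that $G_n$ fails to be uniformly in $L_1$ is false --- the Abel-summation reduction to Fej\'er kernels gives $\|G_n\|_{L_1(\mathbb I^1)}\lesssim 1$, and this is exactly what the paper invokes as ``easy to prove''), and each of the $|B|$ N\"orlund coordinates then costs one logarithm. Your claim (i) that $\|F_n\|_{L_1}\lesssim 1$ uniformly is also false; Lemma~\ref{GG} in the paper gives $\|F_{p_n}\|_{L_1}\gtrsim n$, which is precisely why the N\"orlund block is expensive.

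The genuine gap is the iteration across the N\"orlund coordinates. Knowing only the $\beta=0$ bound $\|L_n(g)\|_{L_1}\lesssim 1+\||g|\log|g|\|_{L_1}$ does not let you descend from $\|L_{n_{l_1}}\circ\cdots\circ L_{n_{l_r}}(f)\|_{L_1}$ to $1+\||f|\log^r|f|\|_{L_1}$: after the first step you face $\int|L_{n_{l_2}}\circ\cdots\circ L_{n_{l_r}}(f)|\log|L_{n_{l_2}}\circ\cdots\circ L_{n_{l_r}}(f)|$, and to continue you need the one-dimensional inequality with a $\log^\beta$ weight on the \emph{left}, i.e.\ $\int|L_n(g)|\log^\beta|L_n(g)|\lesssim 1+\int|g|\log^{\beta+1}|g|$ for every $\beta\ge 0$. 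Your Young/Jensen device cannot supply this, since Jensen against the kernel $|F_n|$ would require $\|F_n\|_{L_1}$ to be bounded, which it is not. The paper gets the entire ladder in one stroke from a Marcinkiewicz-type interpolation inequality (quoted from \cite{E}): any quasilinear $T$ that is simultaneously of weak type $(1,1)$ \emph{and} of strong type $(\alpha,\alpha)$ for some $\alpha>1$ satisfies $\int|Tf|\log^\beta|Tf|\lesssim 1+\int|f|\log^{\beta+1}|f|$ for all $\beta\ge 0$. The weak $(1,1)$ bound for $f\ast F_n$ is cited from \cite{GGTATA}; the strong $(\alpha,\alpha)$ bound, which you never mention, is immediate because $L_n$ is a convex average of partial sums $S_k$ and the $S_k$ are uniformly bounded on $L_\alpha(\mathbb I^1)$ for $1<\alpha<\infty$. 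With these two inputs the proof is the single chain
\[
\|L_{n_{l_1}}\circ\cdots\circ L_{n_{l_r}}(f)\|_{L_1}\lesssim 1+\bigl\|\,|L_{n_{l_2}}\circ\cdots|\log|L_{n_{l_2}}\circ\cdots|\,\bigr\|_{L_1}\lesssim\cdots\lesssim 1+\||f|\log^r|f|\|_{L_1},
\]
applied after the Riesz operators have been peeled off by Fubini and the strong $(1,1)$ bound for $f\ast G_n$.
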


\begin{theorem}
\label{normconv}

Let $B\subset D$ and $f\in L\log ^{|B|}L\left( \mathbb{I}^{d}\right) $. Then 
\begin{equation*}
\left\Vert \left( L_{n_{B}}\circ R_{n_{B^{\prime }}}\right) \left( f\right)
-f\right\Vert _{L_{1}\left( \mathbb{I}^{d}\right) }\rightarrow 0\text{ \ as
\ }n_{i}\rightarrow \infty ,i\in D\text{;}
\end{equation*}
\end{theorem}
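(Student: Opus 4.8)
The plan is to run the standard Banach--Steinhaus argument: couple a uniform boundedness estimate with convergence on a dense subclass.

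First I would recast Theorem \ref{logest1} as a genuine norm inequality. Put $Q(u)=u\log ^{|B|}(1+u)$, the Young function generating $L\log ^{|B|}L(\mathbb{I}^{d})$. If $g$ satisfies $\int_{\mathbb{I}^{d}}Q(|g|)\le 1$, then $\left\Vert |g|\log ^{|B|}|g|\right\Vert _{L_{1}}\lesssim 1$ (the difference between $\log ^{|B|}|g|$ and $\log ^{|B|}(1+|g|)$ being harmless on the set where $|g|$ is small), so Theorem \ref{logest1} gives $\left\Vert \left( L_{n_{B}}\circ R_{n_{B^{\prime }}}\right) (g)\right\Vert _{L_{1}}\lesssim 1$ with a constant independent of $n_{D}$. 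Since the operators $L_{n_{B}}\circ R_{n_{B^{\prime }}}$ are linear, applying this to $g/\Vert g\Vert _{L_{Q}}$ yields
\begin{equation*}
\left\Vert \left( L_{n_{B}}\circ R_{n_{B^{\prime }}}\right) \left( g\right) \right\Vert _{L_{1}\left( \mathbb{I}^{d}\right) }\lesssim \left\Vert g\right\Vert _{L_{Q}\left( \mathbb{I}^{d}\right) }\qquad \text{for all }g\in L\log ^{|B|}L\left( \mathbb{I}^{d}\right) ,
\end{equation*}
uniformly in $n_{D}$.

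Next I would verify the statement on Walsh polynomials. By linearity it suffices to treat $P(\mathbf{x})=\prod _{i\in D}w_{k_{i}}(x_{i})$. Since the mixed kernel factorizes as $F_{n_{B}}(\mathbf{x}_{B}\overset{\cdot }{+}\mathbf{t}_{B})G_{n_{B^{\prime }}}(\mathbf{x}_{B^{\prime }}\overset{\cdot }{+}\mathbf{t}_{B^{\prime }})$, the mean of $P$ factorizes into one-dimensional logarithmic means $L_{n_{i}}(w_{k_{i}})$ for $i\in B$ and Riesz means $R_{n_{i}}(w_{k_{i}})$ for $i\in B^{\prime }$. Using $S_{j}(w_{k})=w_{k}$ for $j>k$ and $S_{j}(w_{k})=0$ for $j\le k$, one computes, for $n>k$, that $L_{n}(w_{k})=w_{k}\,l_{n-k}/l_{n}$ and $R_{n}(w_{k})=w_{k}(l_{n}-l_{k+1})/l_{n}$; hence for all $n_{i}$ large
\begin{equation*}
\left( L_{n_{B}}\circ R_{n_{B^{\prime }}}\right) \left( P\right) =P\cdot \prod _{i\in B}\frac{l_{n_{i}-k_{i}}}{l_{n_{i}}}\prod _{i\in B^{\prime }}\frac{l_{n_{i}}-l_{k_{i}+1}}{l_{n_{i}}}\To P
\end{equation*}
uniformly on $\mathbb{I}^{d}$ as each $n_{i}\to \infty $, $i\in D$. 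Consequently $\left( L_{n_{B}}\circ R_{n_{B^{\prime }}}\right) (P)\to P$ in $L_{1}$ for every Walsh polynomial $P$. Now I would combine the two steps: the Young function $Q$ satisfies the $\Delta _{2}$ condition, so $L\log ^{|B|}L(\mathbb{I}^{d})$ is separable and the Walsh polynomials are dense in it. Given $f\in L\log ^{|B|}L(\mathbb{I}^{d})$ and $\varepsilon >0$, choose a Walsh polynomial $P$ with $\Vert f-P\Vert _{L_{Q}}<\varepsilon $; then
\begin{equation*}
\left\Vert \left( L_{n_{B}}\circ R_{n_{B^{\prime }}}\right) \left( f\right) -f\right\Vert _{L_{1}}\le \left\Vert \left( L_{n_{B}}\circ R_{n_{B^{\prime }}}\right) \left( f-P\right) \right\Vert _{L_{1}}+\left\Vert \left( L_{n_{B}}\circ R_{n_{B^{\prime }}}\right) \left( P\right) -P\right\Vert _{L_{1}}+\left\Vert P-f\right\Vert _{L_{1}},
\end{equation*}
where the first summand is $\lesssim \varepsilon $ by the norm estimate, the last is $\le \Vert P-f\Vert _{L_{Q}}\lesssim \varepsilon $, and the middle one tends to $0$ as $n_{i}\to \infty $, $i\in D$, by the polynomial case. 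Letting $\varepsilon \to 0$ finishes the proof.

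The step I expect to need the most care is the first one: Theorem \ref{logest1} is stated in the non-homogeneous form $1+\Vert |f|\log ^{|B|}|f|\Vert _{L_{1}}$, so one must argue slightly carefully (via modular/norm comparison and the linearity-plus-scaling trick above) to pass to a homogeneous operator-norm bound on the whole Orlicz space. Everything else is routine density and uniform-boundedness machinery; the only genuinely summation-specific input is the explicit one-dimensional identities for $L_{n}(w_{k})$ and $R_{n}(w_{k})$, which moreover guarantee that the limit is unrestricted in $n_{D}$.
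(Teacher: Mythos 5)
Your argument is correct and is exactly the ``standard argument'' the paper invokes without writing out (it only cites Zygmund for this theorem): a homogeneous uniform bound $\left\Vert \left( L_{n_{B}}\circ R_{n_{B^{\prime }}}\right) (g)\right\Vert _{L_{1}}\lesssim \Vert g\Vert _{L_{Q}}$ obtained from Theorem \ref{logest1} by the Luxemburg-norm scaling trick, combined with convergence on the dense class of Walsh polynomials. Your one-dimensional identities $L_{n}(w_{k})=w_{k}\,l_{n-k}/l_{n}$ and $R_{n}(w_{k})=w_{k}(l_{n}-l_{k+1})/l_{n}$ check out, so the proof is complete and matches the intended route.
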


\begin{theorem}
\label{normdiv} Let $L_{Q}\left( \mathbb{I}^{d}\right) $ be an Orlicz space,
such that%
\begin{equation*}
L_{Q}\left( \mathbb{I}^{d}\right) \nsubseteqq L\log ^{|B|}L\left( \mathbb{I}%
^{d}\right) .
\end{equation*}%
Then\newline
a)%
\begin{equation*}
\sup\limits_{n}\left\Vert L_{n\left( B\right) }\circ R_{n\left( B^{\prime
}\right) } \right\Vert _{L_{Q}\left( \mathbb{I}^{d}\right) \rightarrow
L_{1}\left( \mathbb{I}^{d}\right) }=\infty ;
\end{equation*}%
\newline
\newline
b) there exists a function $f\in L_{Q}\left( \mathbb{I}^{d}\right) $ such
that $\left( L_{n\left( B\right) }\circ R_{n\left( B^{\prime }\right)
}\right) \left( f\right) $ does not converge to $f$ in $L_{1}\left( \mathbb{I%
}^{d}\right) $-norm.
\end{theorem}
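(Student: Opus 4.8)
The plan is to deduce part b) from part a) by the Banach--Steinhaus theorem, so that all the weight falls on a). For each fixed $n$ the operator $L_{n(B)}\circ R_{n(B^{\prime})}$ has kernel $F_{n(B)}(\mathbf{x}_{B})G_{n(B^{\prime})}(\mathbf{x}_{B^{\prime}})$, a bounded Walsh polynomial; hence $\left\Vert\left(L_{n(B)}\circ R_{n(B^{\prime})}\right)(g)\right\Vert_{1}\le C_{n}\left\Vert g\right\Vert_{1}\le C_{n}'\left\Vert g\right\Vert_{L_{Q}}$, the last inequality because $\mathbb{I}^{d}$ has finite measure and $Q$ is a Young function. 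Thus $\bigl\{L_{n(B)}\circ R_{n(B^{\prime})}\bigr\}_{n\in\mathbb{N}}$ is a sequence of bounded operators from the Banach space $L_{Q}(\mathbb{I}^{d})$ into $L_{1}(\mathbb{I}^{d})$; once a) is proved, Banach--Steinhaus produces $f\in L_{Q}(\mathbb{I}^{d})$ with $\sup_{n}\left\Vert\left(L_{n(B)}\circ R_{n(B^{\prime})}\right)(f)\right\Vert_{1}=\infty$, and such an $f$ cannot satisfy $\left(L_{n(B)}\circ R_{n(B^{\prime})}\right)(f)\to f$ in $L_{1}$, since an $L_{1}$-convergent sequence is norm bounded. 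So it suffices to prove a).

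For a) I would use the product structure $F_{n(B)}(\mathbf{x}_{B})G_{n(B^{\prime})}(\mathbf{x}_{B^{\prime}})=\prod_{i\in B}F_{n}(x_{i})\prod_{i\in B^{\prime}}G_{n}(x_{i})$ and test on functions $f(\mathbf{x})=c\prod_{i\in B}h(x_{i})$, $h\ge0$, that are constant in the variables $x_{i}$, $i\in B^{\prime}$. Then $R_{n(B^{\prime})}$ reproduces such an $f$, so $\left(L_{n(B)}\circ R_{n(B^{\prime})}\right)(f)=c\prod_{i\in B}(L_{n}h)(x_{i})$ and $\left\Vert\left(L_{n(B)}\circ R_{n(B^{\prime})}\right)(f)\right\Vert_{L_{1}(\mathbb{I}^{d})}=\left\vert c\right\vert\,\left\Vert L_{n}h\right\Vert_{L_{1}(\mathbb{I})}^{\,|B|}$, which reduces matters, quantitatively, to one variable. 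The hypothesis $L_{Q}\nsubseteqq L\log^{|B|}L$, through the criterion for embeddings of Orlicz spaces on a finite measure space, yields a sequence $v_{j}\uparrow\infty$ with $Q(v_{j})\le j^{-1}v_{j}\log^{|B|}(1+v_{j})$; since $Q$ is superlinear, $\log Q(v_{j})\asymp\log v_{j}$. I then put $c=v_{j}$ and $h=\mathbf{1}_{[0,2^{-A_{j}})}$ with $A_{j}:=\lceil|B|^{-1}\log_{2}Q(v_{j})\rceil$, so that $\int_{\mathbb{I}^{d}}Q(\left\vert f_{j}\right\vert)=Q(v_{j})2^{-|B|A_{j}}\le1$ and hence $\left\Vert f_{j}\right\Vert_{L_{Q}}\le1$. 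Since $D_{2^{m}}=2^{m}\mathbf{1}_{[0,2^{-m})}$, one has $L_{n}\mathbf{1}_{[0,2^{-A_{j}})}=2^{-A_{j}}F_{n}$ for every $n\le2^{A_{j}}$, and therefore
\[
\left\Vert\left(L_{n(B)}\circ R_{n(B^{\prime})}\right)(f_{j})\right\Vert_{1}=v_{j}\,2^{-|B|A_{j}}\left\Vert F_{n}\right\Vert_{1}^{\,|B|}\gtrsim\frac{v_{j}}{Q(v_{j})}\left\Vert F_{n}\right\Vert_{1}^{\,|B|}\ge\frac{j}{\log^{|B|}(1+v_{j})}\left\Vert F_{n}\right\Vert_{1}^{\,|B|}.
\]

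It remains to choose $n=n_{j}\le2^{A_{j}}$ so that the Walsh--Dirichlet kernels composing $F_{n_{j}}$ add up to force $\left\Vert F_{n_{j}}\right\Vert_{1}\gtrsim\log n_{j}\asymp A_{j}\asymp\log v_{j}$; then $\left\Vert F_{n_{j}}\right\Vert_{1}^{|B|}\gtrsim\log^{|B|}(1+v_{j})$, the displayed quantity is $\gtrsim j\to\infty$, and a) follows. If the N\"orlund logarithmic Lebesgue constants $\left\Vert F_{n}\right\Vert_{1}$ grow more slowly than $\log n$, one instead takes $h$ to be a superposition $\sum_{l}\mathbf{1}_{[0,2^{-A^{(l)}})}$ over geometrically spaced scales, arranged so that $L_{n_{j}}h$ inherits at each scale a term comparable to a Dirichlet kernel and these add without cancellation, and then rebalances the number of scales against $v_{j}$ and $n_{j}$; the conclusion is the same.

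The decisive step --- and the one I expect to be the main obstacle --- is precisely this one-variable kernel estimate: that the N\"orlund logarithmic operator genuinely loses at least one logarithm in $L_{1}(\mathbb{I})$ along a set of indices dense enough on the logarithmic scale, and that raising it to the $|B|$-th power (the $|B|$-fold product over the $B$-directions) lands exactly at the threshold $L\log^{|B|}L$ and not at a narrower Orlicz space. This refines the divergence phenomena for N\"orlund logarithmic means of Walsh--Fourier series established in \cite{gg}, and parallels the in-measure counterexamples of \cite{GGT}. The remaining ingredients --- that the Riesz factors $G_{n(B^{\prime})}$ are harmless on functions constant in the $B^{\prime}$-variables, the Orlicz-norm bound $\left\Vert f_{j}\right\Vert_{L_{Q}}\le1$, and the arithmetic of $v_{j}$, $A_{j}$, $n_{j}$ --- are routine.
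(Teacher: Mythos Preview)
Your approach is essentially the paper's. The test function you write as $v_{j}\prod_{i\in B}\mathbf{1}_{[0,2^{-A_{j}})}(x_{i})$ is, up to normalization, exactly the paper's $\bigotimes_{i\in B}\tfrac{1}{2}D_{2^{2n+1}}$, since $D_{2^{m}}=2^{m}\mathbf{1}_{[0,2^{-m})}$; your reduction $L_{n}\mathbf{1}_{[0,2^{-A})}=2^{-A}F_{n}$ for $n\le 2^{A}$ is the paper's computation $S_{2^{2n+1}}(F_{p_{n}})=F_{p_{n}}$ in equation~(\ref{pw}); and your Orlicz bookkeeping with $Q(v_{j})\le j^{-1}v_{j}\log^{|B|}(1+v_{j})$ and $2^{|B|A_{j}}\asymp Q(v_{j})$ matches the paper's use of $\Vert f\Vert_{L_{Q}}\le 1+\Vert Q(|f|)\Vert_{1}$ together with $\limsup_{u\to\infty}u\log^{|B|}u/Q(u)=\infty$. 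Part~b) from~a) via Banach--Steinhaus is also the paper's route.

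The one place you stop short is exactly where the paper supplies the missing ingredient. Your ``decisive step'' --- a sequence of indices along which $\Vert F_{n}\Vert_{1}\gtrsim\log n$ --- is Lemma~\ref{GG}: for the lacunary-sum indices $p_{n}=2^{2n}+2^{2n-2}+\cdots+2^{0}$ one has the pointwise bound $|F_{p_{n}}(x)|\gtrsim 1/x$ on $\Omega_{n}\subset[2^{-2n-1},2^{-n})$, whence $\Vert F_{p_{n}}\Vert_{1}\gtrsim\int_{\Omega_{n}}dx/x\gtrsim n$. So there is no need for your fallback superposition construction: the N\"orlund logarithmic Lebesgue constants \emph{do} attain order $\log n$ along $(p_{n})$, and with $n_{j}=p_{m}$ for $m\asymp A_{j}$ the rest of your argument goes through verbatim. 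In short, the proposal is correct, the strategy coincides with the paper's, and the sole gap you flagged is closed by Lemma~\ref{GG}.
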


Thus, the space $L\log ^{|B|}L\left( \mathbb{I}^{d}\right) $ is maximal
Orlicz space such that for each function $f$ from this space the means $%
\left( L_{n\left( B\right) }\circ R_{n\left( B^{\prime }\right) }\right)
\left( f\right) $ converge to $f$ in $L_{1}\left( \mathbb{I}^{d}\right) $%
-norm.

\begin{theorem}
\label{logest2}Let $B\subset D$ and $f\in L\log ^{|B|-1}L\left( \mathbb{I}%
^{d}\right) $. Then%
\begin{equation*}
\left\Vert \left( L_{n_{B}}\circ R_{n_{B^{\prime }}}\right) \left( f\right)
\right\Vert _{weak-L_{1}\left( \mathbb{I}^{d}\right) }\lesssim 1+\left\Vert
\left\vert f\right\vert \log ^{|B|-1}\left\vert f\right\vert \right\Vert
_{L_{1}\left( \mathbb{I}^{d}\right) }.
\end{equation*}
\end{theorem}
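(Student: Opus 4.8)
The plan is to reduce the $d$-dimensional estimate to its one-dimensional case by peeling off a single variable belonging to $B$, and then to invoke Theorem~\ref{logest1}. Since $|B|\ge 1$, fix $j_{1}\in B$ and write a point of $\mathbb{I}^{d}$ as $(x_{j_{1}},\mathbf{x}')$ with $\mathbf{x}'=\mathbf{x}_{D\setminus\{j_{1}\}}\in\mathbb{I}^{d-1}$. The kernel of $T:=L_{n_{B}}\circ R_{n_{B'}}$ factors as $F_{n_{j_{1}}}\!\left(x_{j_{1}}\overset{\cdot}{+}t_{j_{1}}\right)K\!\left(\mathbf{x}'\overset{\cdot}{+}\mathbf{t}'\right)$, where $K$ is the kernel of $S:=L_{n_{B\setminus\{j_{1}\}}}\circ R_{n_{B'}}$, the corresponding mixed logarithmic mean in the $d-1$ variables indexed by $D\setminus\{j_{1}\}$; thus $T=L_{n_{j_{1}}}\circ S=S\circ L_{n_{j_{1}}}$, with the two factors acting on disjoint groups of variables. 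By Fubini's theorem, for every $\lambda>0$,
\begin{equation*}
\mes\left\{\left|Tf\right|>\lambda\right\}=\int_{\mathbb{I}^{d-1}}\mes\left\{x_{j_{1}}:\left|L_{n_{j_{1}}}\!\bigl[(Sf)(\cdot,\mathbf{x}')\bigr](x_{j_{1}})\right|>\lambda\right\}d\mathbf{x}'.
\end{equation*}
Hence the assertion follows once we know the one-dimensional bound
\begin{equation}
\norm{L_{m}g}_{weak-L_{1}(\mathbb{I}^{1})}\lesssim 1+\norm{g}_{L_{1}(\mathbb{I}^{1})},\qquad m\ge 2,\ g\in L_{1}(\mathbb{I}^{1}),
\label{1Dweak}
\end{equation}
with constant independent of $m$: then the inner measure above is $\lesssim\lambda^{-1}\bigl(1+\norm{(Sf)(\cdot,\mathbf{x}')}_{L_{1}(\mathbb{I}^{1})}\bigr)$, integration in $\mathbf{x}'$ gives $\mes\{|Tf|>\lambda\}\lesssim\lambda^{-1}\bigl(1+\norm{Sf}_{L_{1}(\mathbb{I}^{d})}\bigr)$, and, since $f(x_{j_{1}},\cdot)\in L\log^{|B|-1}L(\mathbb{I}^{d-1})$ for a.e.\ $x_{j_{1}}$, Theorem~\ref{logest1}, applied in dimension $d-1$ to the index set $B\setminus\{j_{1}\}$ (which has $|B|-1$ elements) for each fixed $x_{j_{1}}$ and then integrated in $x_{j_{1}}$, yields $\norm{Sf}_{L_{1}(\mathbb{I}^{d})}\lesssim 1+\norm{\,|f|\log^{|B|-1}|f|\,}_{L_{1}(\mathbb{I}^{d})}$. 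Taking the supremum over $\lambda$ finishes the reduction. (When $|B|=1$, $S=R_{n_{B'}}$, and Theorem~\ref{logest1} is replaced by the uniform-in-$n$ boundedness $\norm{R_{n_{B'}}g}_{L_{1}}\lesssim 1+\norm{g}_{L_{1}}$ of the Riesz logarithmic means; for $d=1$ there is nothing to reduce and \eqref{1Dweak} must be established directly.)

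It thus remains to prove \eqref{1Dweak}. I would argue by the Calder\'on--Zygmund / atomic method on the dyadic (Walsh) group: writing $F_{m}(x)=l_{m}^{-1}\sum_{j=1}^{m-1}D_{j}(x)/(m-j)$ and using the standard decomposition of the Walsh--Dirichlet kernels $D_{j}$ along their dyadic blocks $D_{2^{s}}$ (each equal to $2^{s}$ on $[0,2^{-s})$ and to $0$ on $[2^{-s},1)$), one splits $F_{m}=F_{m}^{(1)}+F_{m}^{(2)}$ so that $\sup_{m}\norm{F_{m}^{(1)}}_{L_{1}(\mathbb{I}^{1})}<\infty$ — whence the part of $L_{m}$ with kernel $F_{m}^{(1)}$ is of strong, and a fortiori weak, type $(1,1)$ uniformly in $m$ — while $F_{m}^{(2)}$ carries the part that is \emph{not} uniformly integrable. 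For the part with kernel $F_{m}^{(2)}$ one decomposes $g$ at height $\lambda$ by a dyadic stopping time: the good part is controlled by the $L^{2}$-bound $\norm{L_{m}g}_{L_{2}(\mathbb{I}^{1})}\le\norm{g}_{L_{2}(\mathbb{I}^{1})}$ (immediate from orthogonality of the Walsh system and the averaging defining $L_{m}$), and the bad part by a H\"ormander-type cancellation estimate for $F_{m}^{(2)}$, uniform in $m$.

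The principal obstacle is exactly \eqref{1Dweak}, and within it the uniform H\"ormander-type bound just mentioned. The N\"orlund logarithmic kernel $F_{m}$ is not absolutely integrable with a bound uniform in $m$ — this is precisely why Theorem~\ref{logest1} cannot be sharpened to a plain $L_{1}\to L_{1}$ inequality — so the weak-$(1,1)$ estimate cannot be read off from size bounds on $|F_{m}|$; one must quantify, uniformly in $m$, the cancellation carried by the weights $1/(m-j)$ against the oscillation of the Walsh--Dirichlet kernels, which is closely related to the one-dimensional kernel estimates underpinning Theorem~\ref{logest1}. Once \eqref{1Dweak} is in hand, the passage to arbitrary $d$ is the soft Fubini argument above, and the loss of only $|B|-1$ logarithms — instead of the $|B|$ appearing in the $L_{1}$-valued Theorem~\ref{logest1} — is accounted for by the single variable of $B$ that is processed through the genuinely weak-type one-dimensional bound \eqref{1Dweak}.
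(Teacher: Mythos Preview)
Your reduction---Fubini in the variable indexed by some $j_{1}\in B$, the one-dimensional weak-$(1,1)$ bound for $g\mapsto g\ast F_{m}$, then the strong estimate of Theorem~\ref{logest1} for the remaining $(|B|-1)$-fold $L$-operator---is exactly the paper's argument. The only difference is that the paper does not prove the one-dimensional weak-$(1,1)$ inequality you call the ``principal obstacle'' but simply cites it from~\cite{GGTATA} (it is the displayed estimate for $\|f\ast F_{n}\|_{weak-L_{1}}$ in the proof of Theorem~\ref{logest1}), so your Calder\'on--Zygmund sketch is unnecessary and the proof is complete once that reference is invoked.
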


\begin{theorem}
\label{measureconv}

Let $B\subset D$ and $f\in L\log ^{|B|-1}L\left( \mathbb{I}^{d}\right) $.
Then 
\begin{equation*}
\left( L_{n_{B}}\circ R_{n_{B^{\prime }}}\right) \left( f\right) \rightarrow
f\text{ in measure on }\mathbb{I}^{d}\text{, \ as \ }n_{i}\rightarrow \infty
,i\in D\text{.}
\end{equation*}
\end{theorem}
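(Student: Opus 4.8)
The argument will follow the standard two–ingredient template for convergence in measure: combine the uniform (in $n$) weak‑type bound of Theorem~\ref{logest2} with convergence on a dense subclass. Write $T_{n_D}:=L_{n_B}\circ R_{n_{B'}}$; this is a \emph{linear} operator (integration against the fixed kernel $F_{n_B}G_{n_{B'}}$), and ``$n_i\to\infty$, $i\in D$'' means $\min_{i\in D}n_i\to\infty$. Moreover $T_{n_D}$ is a convex combination of the rectangular Dirichlet projections $S_{n_B-i_B,i_{B'}}$ — the coefficients $\prod_{i\in D}l_{n_i}^{-1}\prod_{j\in D}i_j^{-1}$ are positive and sum to $1$ — hence $\left\Vert T_{n_D}h\right\Vert_2\le\left\Vert h\right\Vert_2$ for every $h\in L_2(\mathbb{I}^d)$.

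\emph{Step 1: convergence on bounded functions.} For a Walsh polynomial $P$ of degree $<2^A$ in each variable one has $S_{m_D}(P)=P$ as soon as every coordinate of $m_D$ is $\ge 2^A$, while $|S_{m_D}(P)|\le M_P:=\sum_{\mathbf j}|\widehat P(\mathbf j)|$ for \emph{every} $m_D$. Plugging this into the defining average of $T_{n_D}P$ and splitting the index range into the block on which every partial sum equals $P$ and its complement, the first block contributes $P\cdot\prod_{i\in D}(\text{ratio of truncated partial sums of }l_{n_i})\to P$, while the complement is bounded by $M_P$ times a sum of products in each of which at least one factor is $O\bigl(l_{n_l}^{-1}(n_l-2^A)^{-1}\bigr)\to0$; hence $T_{n_D}P\to P$ uniformly. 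Since Walsh polynomials are dense in $L_2(\mathbb{I}^d)$ and $T_{n_D}$ is an $L_2$‑contraction, $\left\Vert T_{n_D}h-h\right\Vert_2\to0$, and in particular $T_{n_D}h\to h$ in measure, for every bounded (indeed every $L_2$) function $h$.

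\emph{Step 2: truncation and the tail.} Fix $f\in L\log^{|B|-1}L(\mathbb{I}^d)$, $\lambda>0$, and for $N\ge e$ write $f=f_N+f^N$ with $f_N:=f\cdot\mathbf{1}_{\{|f|\le N\}}$ and $f^N:=f\cdot\mathbf{1}_{\{|f|>N\}}$. Then
\begin{equation*}
\mes\{|T_{n_D}f-f|>3\lambda\}\le\mes\{|T_{n_D}f_N-f_N|>\lambda\}+\mes\{|T_{n_D}f^N|>\lambda\}+\mes\{|f^N|>\lambda\}.
\end{equation*}
The first term tends to $0$ as $\min_i n_i\to\infty$ by Step 1 ($N$ fixed), and the third is $\le\lambda^{-1}\|f^N\|_1$, which is as small as we please for $N$ large since $\|f^N\|_1\to0$. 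For the middle term, apply Theorem~\ref{logest2} to $Cf^N$ for a large constant $C>0$ and use linearity:
\begin{equation*}
\mes\{|T_{n_D}f^N|>\lambda\}\le\frac{1}{\lambda}\left\Vert T_{n_D}f^N\right\Vert_{weak-L_1}=\frac{1}{\lambda C}\left\Vert T_{n_D}(Cf^N)\right\Vert_{weak-L_1}\lesssim\frac{1}{\lambda C}\left(1+\left\Vert |Cf^N|\log^{|B|-1}|Cf^N|\right\Vert_1\right).
\end{equation*}
Since $|f^N|>N\ge e$ on its support, $\log^{|B|-1}(C|f^N|)=(\log C+\log|f^N|)^{|B|-1}\lesssim(\log C)^{|B|-1}+\log^{|B|-1}|f^N|$, so the last display is
\begin{equation*}
\lesssim\frac{1}{\lambda C}+\frac{(\log C)^{|B|-1}}{\lambda}\,\|f^N\|_1+\frac{1}{\lambda}\left\Vert |f^N|\log^{|B|-1}|f^N|\right\Vert_1 .
\end{equation*}
One first chooses $C$ so large that $\lambda^{-1}C^{-1}$ is negligible and \emph{then}, with $C$ fixed, chooses $N$ so large that the remaining two terms are negligible; this is legitimate precisely because $f\in L\log^{|B|-1}L$ forces both $\|f^N\|_1\to0$ and $\bigl\Vert|f^N|\log^{|B|-1}|f^N|\bigr\Vert_1\to0$ as $N\to\infty$. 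Feeding these estimates back and letting $\min_i n_i\to\infty$ yields $\mes\{|T_{n_D}f-f|>3\lambda\}\to0$ for every $\lambda>0$, which is the assertion.

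The only genuinely delicate point is the tail estimate in Step 2: the innocuous ``$1+$'' on the right of Theorem~\ref{logest2} is \emph{not} negligible by itself on the probability space $\mathbb{I}^d$, and it is defeated only by first dilating the small tail $f^N$ by a large constant (exploiting linearity of $T_{n_D}$) and afterwards shrinking $f^N$, the order of the two limits being essential. Everything else — the behaviour of $T_{n_D}$ on Walsh polynomials, its $L_2$‑contractivity and the passage to bounded functions, and the elementary tail bounds for the modular $\int|f^N|\log^{|B|-1}|f^N|$ — is routine.
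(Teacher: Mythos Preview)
Your argument is correct and is precisely the ``standard argument'' the paper invokes without elaboration (the paper's entire proof of Theorem~\ref{measureconv} is the single sentence referring the reader to Zygmund). In particular, your dilation trick for neutralising the additive ``$1+$'' in the weak--type bound of Theorem~\ref{logest2} is the appropriate way to make that reference rigorous in the Orlicz setting.
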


\begin{theorem}
\label{baire} Let $B\subset D,|B|>1$ and $L_{Q}\left( \mathbb{I}^{d}\right) $
be an Orlicz space, such that%
\begin{equation*}
L_{Q}\left( \mathbb{I}^{d}\right) \nsubseteqq L\log ^{|B|-1}L\left( \mathbb{I%
}^{d}\right) .
\end{equation*}%
Then the set of the functions from the Orlicz space $L_{Q}\left( \mathbb{I}%
^{d}\right) $ with logarithmic means $\left( L_{n_{B}}\circ R_{n_{B^{\prime
}}}\right) \left( f\right) $ of rectangular partial sums of multiple Fourier
series convergent in measure on $\mathbb{I}^{d}$ is of first Baire category
in $L_{Q}\left( \mathbb{I}^{d}\right) .$\newline
\end{theorem}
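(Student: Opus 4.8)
The plan is to reduce, via the uniform boundedness principle in $F$-spaces, to a single divergence statement for the logarithmic kernels, and then to establish that statement using the hypothesis $L_{Q}\nsubseteqq L\log ^{|B|-1}L$.

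Put $U_{n}:=L_{n\left( B\right) }\circ R_{n\left( B^{\prime }\right) }$ (the means along the diagonal index). First I would record that each $U_{n}$ is a continuous linear operator from $L_{Q}\left( \mathbb{I}^{d}\right) $ into $L_{0}\left( \mathbb{I}^{d}\right) $, the $F$-space of measurable functions equipped with the metric $\rho $ of convergence in measure: for a fixed $n$, every Walsh--Dirichlet kernel $D_{k}$ with $k\leq n$ is a bounded step function, hence the mixed kernel $F_{n\left( B\right) }G_{n\left( B^{\prime }\right) }$ is bounded, so $U_{n}:L_{1}\left( \mathbb{I}^{d}\right) \to L_{\infty }\left( \mathbb{I}^{d}\right) $ is bounded, and $L_{Q}\left( \mathbb{I}^{d}\right) \hookrightarrow L_{1}\left( \mathbb{I}^{d}\right) \hookrightarrow L_{0}\left( \mathbb{I}^{d}\right) $ continuously. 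If $\left( L_{n_{B}}\circ R_{n_{B^{\prime }}}\right) \left( f\right) $ converges in measure as all $n_{i}\to \infty $, then in particular $\left( U_{n}f\right) _{n\in \mathbb{N}}$ converges and is therefore a bounded sequence in $L_{0}\left( \mathbb{I}^{d}\right) $; hence the set in Theorem~\ref{baire} is contained in
\[
\mathcal{B}:=\bigl\{f\in L_{Q}(\mathbb{I}^{d}):\{U_{n}f:n\in \mathbb{N}\}\text{ is bounded in }L_{0}(\mathbb{I}^{d})\bigr\}.
\]
By the (non-meager form of the) uniform boundedness principle, valid in every $F$-space, if $\mathcal{B}$ is of second Baire category then $\{U_{n}\}$ is equicontinuous at $0$. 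So it suffices to prove that $\{U_{n}\}$ is \emph{not} equicontinuous at $0$; then $\mathcal{B}$, and a fortiori the set in Theorem~\ref{baire}, is of first category.

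Non-equicontinuity means: there is an absolute $c_{0}>0$ such that for every $\delta >0$ there exist $\varphi \in L_{Q}\left( \mathbb{I}^{d}\right) $ with $\left\Vert \varphi \right\Vert _{L_{Q}}\leq \delta $ and an index $n^{\ast }$ with $\mes\left\{ x\in \mathbb{I}^{d}:\left\vert U_{n^{\ast }}\left( \varphi \right) \left( x\right) \right\vert >1\right\} \geq c_{0}$. To produce such $\varphi $ I would use that $L_{Q}\nsubseteqq L\log ^{|B|-1}L$ is equivalent to $\limsup_{u\to \infty }u\log ^{|B|-1}u/Q\left( u\right) =+\infty $: from this, for any $\delta >0$ and any prescribed $A>0$ one selects $h$ large and a dyadic parallelepiped $E$ of (small) measure $\mu $ so that $\varphi :=h\mathbf{1}_{E}$ satisfies simultaneously $\left\Vert \varphi \right\Vert _{L_{Q}}\leq \delta $ and $\left\Vert \left\vert \varphi \right\vert \log ^{|B|-1}\left\vert \varphi \right\vert \right\Vert _{1}\geq A$. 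Feeding such a highly concentrated bump (after, if needed, a Walsh modulation, which changes neither of these two quantities) into the pointwise lower estimates for the logarithmic kernels $F_{n}$ --- the same estimates responsible for the loss of one logarithm between the $L_{1}$ bound of Theorem~\ref{logest1} and the weak bound of Theorem~\ref{logest2} --- one attaches to $\varphi $ an index $n^{\ast }$ for which $\mes\left\{ \left\vert U_{n^{\ast }}\left( \varphi \right) \right\vert >1\right\} \geq c_{0}$, $c_{0}$ absolute, provided $A$ was taken large enough. Since that last estimate depends on $\varphi $ only through its $L\log ^{|B|-1}L$-modular and its (small) support, the bound $\left\Vert \varphi \right\Vert _{L_{Q}}\leq \delta $ is preserved, and the required non-equicontinuity follows.

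The main obstacle is this kernel estimate: showing that a single bump $\varphi =h\mathbf{1}_{E}$ with large $L\log ^{|B|-1}L$-modular forces $\left\vert U_{n^{\ast }}\varphi \right\vert $ to exceed a fixed threshold on a set of measure bounded away from $0$ for some $n^{\ast }$. This is where the fine structure of the Walsh logarithmic kernels $F_{n}$, and in particular the $|B|$-fold product form of $F_{n\left( B\right) }G_{n\left( B^{\prime }\right) }$, must be used, along the lines of the sharpness arguments behind Theorems~\ref{logest1}--\ref{logest2}. Everything else --- continuity of the $U_{n}$, the reduction through the uniform boundedness principle, and the step from non-equicontinuity to first category --- is routine. (The assumption $|B|>1$ only makes the statement non-vacuous: every Orlicz space on $\mathbb{I}^{d}$ is contained in $L_{1}=L\log ^{0}L$.)
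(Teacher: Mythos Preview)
Your abstract reduction via the uniform boundedness principle in $F$-spaces is fine and, up to bookkeeping, equivalent to the paper's Lemma~\ref{GGT}. The genuine gap is in your ``main obstacle'' paragraph: a \emph{single} bump $\varphi=h\mathbf{1}_{E}$ does \emph{not} produce $\mes\{\lvert U_{n^{\ast}}\varphi\rvert>1\}\geq c_{0}$ for an absolute $c_{0}$, no matter how large you make its $L\log^{|B|-1}L$-modular. Indeed, the paper's own computation (equations (\ref{pw}) and Lemma~\ref{GG}) shows that for the canonical bump $\bigotimes_{i\in B}D_{2^{2n+1}}$ one only gets
\[
\mes\Bigl\{\mathbf{x}:\bigl\lvert U_{p_{n}}\bigl(\textstyle\bigotimes_{i\in B}D_{2^{2n+1}}\bigr)\bigr\rvert\gtrsim 2^{n(2|B|-1)}\Bigr\}\gtrsim \frac{n^{|B|-1}}{2^{n(2|B|-1)}},
\]
which tends to $0$. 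This is exactly the weak-$L_{1}$ scaling you would expect from Theorem~\ref{logest2}; you cannot extract a uniformly positive measure level set from it directly.

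What the paper does to close this gap is invoke Theorem~\ref{Garsia} (Garsia's translation--randomization lemma): since the operators commute with dyadic translations, one takes $r_{k}\sim 2^{n_{k}(2|B|-1)}/n_{k}^{|B|-1}$ random signed translates of the Dirichlet bump, and Garsia guarantees that for a suitable choice the image exceeds the same threshold on a set of measure $\geq 1/8$. The resulting test function $\xi_{k}$ is a sum of $r_{k}$ translates of $D_{2^{2n_{k}+1}}$'s, normalized by $2^{2|B|n_{k}-1}/Q(2^{2n_{k}|B|})$; the verification that $\xi_{k}\in S_{Q}(0,1)$ then uses the convexity of $Q$ together with the hypothesis-driven choice $Q(2^{2n_{k}|B|})/(2^{2n_{k}|B|}n_{k}^{|B|-1})\to 0$. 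Your outline is missing precisely this boosting step; without it, the non-equicontinuity claim remains unproved.
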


\begin{corollary}
\label{measuredivergence}Let $B\subset D,|B|>1$ and $\varphi :[0,\infty
\lbrack \rightarrow \lbrack 0,\infty \lbrack $ be a nondecreasing function
satisfying for $x\rightarrow +\infty $ the condition

\begin{equation*}
\varphi (x)=o(x\log ^{|B|-1}x).
\end{equation*}

Then there exists the function $f\in L_{1}(\mathbb{I}^{d})$ such that

a) 
\begin{equation*}
\int\limits_{\mathbb{I}^{d}}\varphi (\left\vert f\right\vert )<\infty ;
\end{equation*}

b) logarithmic means $\left( L_{n_{B}}\circ R_{n_{B^{\prime }}}\right)
\left( f\right) $ of rectangular partial sums of multiple Fourier series of $%
f$ diverges in measure on $\mathbb{I}^{d}$.
\end{corollary}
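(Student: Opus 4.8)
The plan is to derive Corollary~\ref{measuredivergence} from Theorem~\ref{baire} by replacing the (possibly non-convex) function $\varphi $ with a suitable Young function. Put $\Phi (x):=x\log ^{|B|-1}(1+x)$, a Young function with $L_{\Phi }(\mathbb{I}^{d})=L\log ^{|B|-1}L(\mathbb{I}^{d})$. Since $\varphi $ is nondecreasing and $\varphi (x)=o(\Phi (x))$, the first step is to produce a Young function $Q$ with
\begin{equation*}
Q(x)\ge \varphi (x)\ \text{for all large }x\qquad \text{and}\qquad \lim _{x\to \infty }\frac{Q(x)}{x\log ^{|B|-1}x}=0 .
\end{equation*}
The second property is exactly what guarantees $L_{Q}(\mathbb{I}^{d})\nsubseteqq L\log ^{|B|-1}L(\mathbb{I}^{d})$, so that Theorem~\ref{baire} will be applicable to the Orlicz space $L_{Q}(\mathbb{I}^{d})$.

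The construction of $Q$ is a routine, if slightly technical, Orlicz-space argument (cf.\ \cite{KR}). Put $\eta (s):=\sup _{u\ge s}\dfrac{\varphi (2u)/u}{\log ^{|B|-1}(1+u)}$, which is nonincreasing with $\eta (s)\to 0$, and let $Q(x)=\int _{0}^{x}q$, where the nondecreasing density $q$ is built block by block along a sufficiently sparse sequence $T_{1}<T_{2}<\cdots \to \infty $ so that on the $m$-th block $[T_{m},T_{m+1})$ one has $\varphi (2t)/t\le q(t)\le \eta (T_{m-1})\log ^{|B|-1}(1+t)$ (within each block $q$ carries over the maximal value of the previous block in order to stay nondecreasing, and may be floored by a slowly increasing function so that $Q$ is genuinely a Young function), the $T_{m}$ being chosen so large that $Q(T_{m-1})=o\bigl(\Phi (T_{m})\bigr)$. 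Then $Q(x)\ge \int _{x/2}^{x}q\ge q(x/2)\cdot \tfrac{x}{2}\ge \varphi (x)$ for $x\ge 2T_{1}$, while for $x\in [T_{m},T_{m+1})$ the block bound yields $Q(x)\le Q(T_{m-1})+\eta (T_{m-2})\Phi (x)$, so that $Q(x)/\Phi (x)\to 0$.

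Next I would invoke Theorem~\ref{baire}: the set
\begin{equation*}
\mathcal{C}:=\bigl\{g\in L_{Q}(\mathbb{I}^{d}):\ (L_{n_{B}}\circ R_{n_{B^{\prime }}})(g)\ \text{converges in measure on }\mathbb{I}^{d}\bigr\}
\end{equation*}
is of first Baire category in the Banach space $L_{Q}(\mathbb{I}^{d})$, so $\mathcal{C}\ne L_{Q}(\mathbb{I}^{d})$. Pick $g\in L_{Q}(\mathbb{I}^{d})\setminus \mathcal{C}$ and set $f:=cg$ with $c\ne 0$ chosen so that $\|f\|_{L_{Q}}\le 1$; since $(L_{n_{B}}\circ R_{n_{B^{\prime }}})(cg)=c\,(L_{n_{B}}\circ R_{n_{B^{\prime }}})(g)$, the means of $f$ also fail to converge in measure, in particular they do not converge to $f$ in measure — this is assertion (b). Finally, $\|f\|_{L_{Q}}\le 1$ and the continuity of $Q$ give $\int _{\mathbb{I}^{d}}Q(|f|)\le 1$, whence, using $\mes (\mathbb{I}^{d})=1$, the monotonicity of $\varphi $, and $Q(u)\ge \varphi (u)$ for $u\ge x_{0}$,
\begin{equation*}
\int _{\mathbb{I}^{d}}\varphi (|f|)\le \varphi (x_{0})+\int _{\mathbb{I}^{d}}Q(|f|)\le \varphi (x_{0})+1<\infty ,
\end{equation*}
which is (a); and $Q(u)/u\to \infty $ forces $u\le 1+Q(u)$ for large $u$, so $f\in L_{1}(\mathbb{I}^{d})$ as well.

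The only real obstacle is the first step, the construction of $Q$: one must dominate the merely monotone $\varphi $ by a convex function while keeping $Q(x)=o(x\log ^{|B|-1}x)$. The naive candidates do not work — the least convex majorant of $\varphi $ need not dominate $\varphi $ on its concave stretches, whereas $x\mapsto \delta (x)\Phi (x)$ with $\delta (x):=\sup _{t\ge x}\varphi (t)/\Phi (t)$ does dominate $\varphi $ but is in general not convex, and a running-maximum repair reintroduces the non-negligible early values of $\delta $. The block construction above circumvents this by re-initialising along a sparse sequence $\{T_{m}\}$ so that the mass $Q(T_{m-1})$ accumulated up to the $m$-th block stays negligible against $\Phi (T_{m})$. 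Granted this lemma, the corollary is a purely formal consequence of Theorem~\ref{baire}, which carries all the analytic content.
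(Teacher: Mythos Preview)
Your approach is correct and coincides with the paper's: the paper derives the corollary ``immediately from Theorem~\ref{baire} and Lemma~\ref{GGT2}'', where Lemma~\ref{GGT2} is exactly the statement that a measurable $\varphi$ with $\varphi(x)=o(\Phi(x))$ is dominated, for large $x$, by a Young function $\omega$ still satisfying $\omega(x)=o(\Phi(x))$. You have simply sketched a proof of that lemma (the block construction of $Q$) rather than citing it, and then carried out the same deduction from Theorem~\ref{baire} via the Baire category theorem and rescaling.
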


\section{Auxiliary Results}

We apply the reasoning of \cite{G} formulated as the following proposition
in particular case.

\begin{theorem}
\label{Garsia} Let $H:L_{1}(\mathbb{I}^{d})\rightarrow L_{0}(\mathbb{I}^{d})$
be a linear continuous operator, which commutes with family of translations $%
\mathcal{E}$, i. e. $\forall E\in \mathcal{E}\quad \forall f\in L_{1}(%
\mathbb{I}^{d})\quad HEf=EHf$. Let $\Vert f\Vert _{L_{1}(\mathbb{I}^{d})}=1$
and $\lambda >1$. %\begin{lemma}
Then for any $1\leq r\in \mathbb{N}$ under condition mes$\{\mathbf{x}\in 
\mathbb{I}^{d}:|Hf|>\lambda \}\geq \frac{1}{r}$ there exist $%
E_{1},...,E_{r}\in \mathcal{E}$ and $\varepsilon _{i}=\pm 1,\quad i=1,...,r$
such that 
\begin{equation*}
\mes\{\mathbf{x}\in \mathbb{I}^{d}:|H(\sum_{i=1}^{r}\varepsilon _{i})f(E_{i}%
\mathbf{x})|>\lambda \}\geq \frac{1}{8}.
\end{equation*}
\end{theorem}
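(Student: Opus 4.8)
The plan is to follow the classical "sliding hump / random translation" argument due to Garsia, adapted to the dyadic group $G^d = (\mathbb{I}^d, \overset{\cdot}{+})$, where the family $\mathcal{E}$ consists of the dyadic translations $E_\tau f(\mathbf{x}) := f(\mathbf{x} \overset{\cdot}{+} \tau)$, $\tau \in \mathbb{I}^d$. The key mechanism is a probabilistic averaging: if a single translate $Hf$ is large (exceeds $\lambda$) on a set of measure at least $1/r$, then by choosing $r$ translations $E_1, \dots, E_r$ and random signs $\varepsilon_i = \pm 1$ and forming $H\bigl(\sum_{i=1}^r \varepsilon_i f(E_i \cdot)\bigr) = \sum_{i=1}^r \varepsilon_i (Hf)(E_i \cdot)$ (using linearity and the commutation hypothesis), one can, with positive probability, make this sum large on a set of measure at least $1/8$. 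The translates are placed "generically" so that the supports of the large sets of the individual summands are essentially disjoint, and then the Rademacher sum argument forces a lower bound.

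The steps I would carry out, in order. First, let $A := \{\mathbf{x} : |Hf(\mathbf{x})| > \lambda\}$, so $\mathm{A} \geq 1/r$ by hypothesis, and WLOG $\mathm{A} = 1/r$ (shrink $A$). Second, choose the translations: pick $E_i = E_{\tau_i}$ so that the sets $\tau_i \overset{\cdot}{+} A$ (i.e.\ the supports of $\mathbf{x} \mapsto (Hf)(E_i \mathbf{x})$ restricted to the large-value region) are pairwise disjoint — this is possible because $r$ copies of a set of measure $1/r$ can be dyadically translated to tile $\mathbb{I}^d$; more robustly, one selects $\tau_i$ at random (uniformly) and uses that the expected overlap is small, or one uses the group structure directly to get an exact partition. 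Third, fix $\mathbf{x}$ in the union $\bigcup_i (\tau_i \overset{\cdot}{+} A)$: for such $\mathbf{x}$ exactly one term $(Hf)(E_j \mathbf{x})$ has modulus $> \lambda$ (the "dominant" term) while the others may be arbitrary complex numbers. Fourth, apply the standard fact about signed sums: for any complex numbers $a_1, \dots, a_r$ with $|a_j| > \lambda$ for some fixed $j$, the set of sign-vectors $(\varepsilon_i) \in \{\pm 1\}^r$ for which $\bigl|\sum_i \varepsilon_i a_i\bigr| > \lambda$ has normalized counting measure at least $1/4$ (pair up $\varepsilon$ with the vector obtained by flipping $\varepsilon_j$; at least one of the two sums has modulus $\geq |a_j| > \lambda$, since their average is $\sum_{i \neq j}\varepsilon_i a_i$ and they differ by $2\varepsilon_j a_j$ — actually their half-difference is $\pm a_j$, so $\max$ of the two moduli is $\geq |a_j|$; hence at least half the pairs contribute, giving density $\geq 1/2$, and one gets $1/4$ cheaply even accounting for the real/imaginary split). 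Fifth, integrate (a Fubini argument over $\mathbf{x} \times (\varepsilon_i)$): the expected measure of $\{\mathbf{x} : |\sum_i \varepsilon_i (Hf)(E_i\mathbf{x})| > \lambda\}$ over random signs is at least $\tfrac14 \cdot \mathm{\bigcup_i (\tau_i \overset{\cdot}{+} A)} = \tfrac14 \cdot r \cdot \tfrac1r = \tfrac14 > \tfrac18$, so some choice of signs achieves measure $\geq 1/8$.

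The main obstacle — and the only place requiring care — is the second step: arranging that the large-value sets of the $r$ translated copies are (nearly) disjoint, so that at each point of their union there is a genuine dominant term of modulus exceeding $\lambda$. In the dyadic setting this is cleaner than on $\mathbb{T}^d$ because one can literally partition $\mathbb{I}^d$ into $r$ translates of a set of measure $1/r$ when $r$ is a power of $2$, and reduce the general case to that (or run the probabilistic selection and absorb the small overlap by slightly weakening $1/4$ to, say, $1/8$ before the final normalization — the constant $1/8$ in the statement leaves exactly this room). A secondary technical point is to confirm that $H$ maps into $L_0$ so that all the pointwise manipulations of $Hf$ are legitimate a.e., and that commutation with $\mathcal{E}$ is used only in the identity $H E_{\tau} f = E_{\tau} H f$, which holds by hypothesis; no continuity of $H$ beyond what is stated is needed for this particular conclusion, the continuity being relevant only for the applications of the theorem later.
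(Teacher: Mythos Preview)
The paper does not prove this theorem at all: it is stated as an auxiliary result with the attribution ``We apply the reasoning of \cite{G} formulated as the following proposition in particular case,'' and no argument is given. So there is no proof in the paper to compare against; your sketch is, in effect, a reconstruction of Garsia's original argument from the cited monograph.

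Your outline is essentially that argument and is sound in spirit, with one correction. In step~2 you assert that $r$ dyadic translates of a set of measure $1/r$ can be made pairwise disjoint (``tile $\mathbb{I}^d$''), and in step~5 you use $\mes\bigl(\bigcup_i(\tau_i\overset{\cdot}{+}A)\bigr)=r\cdot\tfrac1r=1$. This is false in general: for instance $A=[0,\tfrac14)\cup[\tfrac12,\tfrac34)\subset\mathbb{I}^1$ has measure $\tfrac12$ but every dyadic translate of $A$ equals either $A$ or its complement, so two disjoint translates do not exist unless one of them is exactly the complement. The correct route is the probabilistic selection you mention parenthetically: choose $\tau_1,\dots,\tau_r$ independently and uniformly, so that for each $\mathbf{x}$ the probability that $\mathbf{x}\notin\bigcup_i(\tau_i\overset{\cdot}{+}A)$ is $(1-\mes A)^r\le(1-1/r)^r\le e^{-1}$, whence $\mathbb{E}_\tau\,\mes\bigl(\bigcup_i(\tau_i\overset{\cdot}{+}A)\bigr)\ge 1-e^{-1}>\tfrac12$. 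Fixing such $\tau_i$ and combining with your sign-pairing lemma (which indeed gives probability $\ge\tfrac12$ over signs, not merely $\tfrac14$, since $|s|+|s'|\ge|s-s'|=2|a_j|>2\lambda$ forces $\max(|s|,|s'|)>\lambda$), the Fubini average over signs is $\ge\tfrac12\cdot\tfrac12=\tfrac14>\tfrac18$, and some deterministic choice of signs achieves the bound. With this adjustment the proof is complete; the constant $\tfrac18$ in the statement is simply a convenient slack.
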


\begin{lemma}
\label{GGT}Let $\{H_{m}\}_{m=1}^{\infty }$ be a sequence of linear
continuous operators, acting from Orlicz space $L_{Q}(\mathbb{I}^{d})$ in to
the space $L_{0}(\mathbb{I}^{d})$. Suppose that there exists a sequence of
functions $\{\xi _{k}\}_{k=1}^{\infty }$ from unit ball $S_{Q}(0,1)$ of
space $L_{Q}(\mathbb{I}^{d})$, sequences of integers $\{m_{k}\}_{k=1}^{%
\infty }$ and $\{\nu _{k}\}_{k=1}^{\infty }$ increasing to infinity such that

\begin{equation*}
\varepsilon _{0}=\inf_{k}\mes\{\mathbf{x}\in \mathbb{I}^{d}:|H_{m_{k}}\xi
_{k}\left( x,y\right) |>\nu _{k}\}>0.
\end{equation*}

Then $K$ - the set of functions $f$ from space $L_{Q}(\mathbb{I}^{d})$, for
which the sequence $\{H_{m}f\}$ converges in measure to an a. e. finite
function is of first Baire category in space $L_{Q}(\mathbb{I}^{d})$.
\end{lemma}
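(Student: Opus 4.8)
The plan is to run the classical Banach--Steinhaus (``uniform boundedness'') scheme adapted to the non-locally-convex $F$-space $L_{0}(\mathbb{I}^{d})$, topologised by the metric of convergence in measure $\rho(f,g)=\int_{\mathbb{I}^{d}}|f-g|/(1+|f-g|)$, in which $K$ is to be covered by countably many closed, nowhere dense sets. Concretely, I would set
\[
F_{N}:=\bigl\{f\in L_{Q}(\mathbb{I}^{d}):\ \mes\{\mathbf{x}\in\mathbb{I}^{d}:|H_{m}f(\mathbf{x})|\ge N\}\le\varepsilon_{0}/3\ \text{ for every }m\in\mathbb{N}\bigr\},\qquad N\in\mathbb{N},
\]
and then prove (i) $K\subset\bigcup_{N}F_{N}$, (ii) each $F_{N}$ is closed in $L_{Q}(\mathbb{I}^{d})$, and (iii) each $F_{N}$ has empty interior. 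Once this is done, each $F_{N}$ is nowhere dense, so $K$, being contained in their union, is of first Baire category.

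For (i), fix $f\in K$ and let $\widetilde f$ be the a.e.\ finite function to which $H_{m}f$ converges in measure. From $\{|H_{m}f|>2\lambda\}\subset\{|\widetilde f|>\lambda\}\cup\{|H_{m}f-\widetilde f|>\lambda\}$, together with the facts that $\widetilde f$ is finite a.e., that $H_{m}f\to\widetilde f$ in measure, and that the finitely many remaining functions $H_{0}f,\dots,H_{M-1}f$ lie in $L_{0}$ (hence have small superlevel sets for large $\lambda$), one gets $\lim_{\lambda\to\infty}\sup_{m}\mes\{|H_{m}f|>\lambda\}=0$; choosing $N$ with $\sup_{m}\mes\{|H_{m}f|>N-1\}\le\varepsilon_{0}/3$ places $f$ in $F_{N}$. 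For (iii), suppose some ball $S_{Q}(f_{0},\delta)$ lies in $F_{N}$; put $\delta'=\delta/2$, so that $f_{0}+\delta'\xi_{k}\in S_{Q}(f_{0},\delta)\subset F_{N}$ for all $k$, since $\xi_{k}\in S_{Q}(0,1)$. On the set where $|H_{m_{k}}\xi_{k}|>\nu_{k}$ (of measure $\ge\varepsilon_{0}$ by hypothesis), intersected with the set where $|H_{m_{k}}f_{0}|<N$ (complement of measure $\le\varepsilon_{0}/3$, since $f_{0}\in F_{N}$), linearity gives $|H_{m_{k}}(f_{0}+\delta'\xi_{k})|\ge\delta'|H_{m_{k}}\xi_{k}|-|H_{m_{k}}f_{0}|>\delta'\nu_{k}-N$; choosing $k$ so large that $\delta'\nu_{k}-N\ge N$ (possible since $\nu_{k}\to\infty$) forces $\mes\{|H_{m_{k}}(f_{0}+\delta'\xi_{k})|\ge N\}\ge\varepsilon_{0}-\varepsilon_{0}/3>\varepsilon_{0}/3$, contradicting $f_{0}+\delta'\xi_{k}\in F_{N}$.

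The step that needs the most care, and where continuity of the $H_{m}$ from $L_{Q}$ into $L_{0}$ really enters, is (ii): if $f_{j}\to f$ in $L_{Q}$ with $f_{j}\in F_{N}$, then $H_{m}f_{j}\to H_{m}f$ in measure for each fixed $m$, but superlevel sets are not continuous along convergence in measure, so I cannot simply pass to the limit. I would handle this with the usual $\eta\downarrow 0$ device: from $\{|H_{m}f|>N+\eta\}\subset\{|H_{m}f_{j}|\ge N\}\cup\{|H_{m}(f-f_{j})|>\eta\}$ one gets $\mes\{|H_{m}f|>N+\eta\}\le\varepsilon_{0}/3+\mes\{|H_{m}(f-f_{j})|>\eta\}$; letting $j\to\infty$ and then $\eta\to0^{+}$ yields $\mes\{|H_{m}f|\ge N\}\le\varepsilon_{0}/3$, i.e.\ $f\in F_{N}$. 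This is exactly why $F_{N}$ must be defined with the closed superlevel set $\{|H_{m}f|\ge N\}$ rather than the open one $\{|H_{m}f|>N\}$. With (i)--(iii) in hand the Baire category conclusion is immediate.
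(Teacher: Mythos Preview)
The paper does not actually prove this lemma; it merely cites \cite{GGT} for the proof. Your argument is the standard Banach--Steinhaus / condensation-of-singularities scheme in $L_{0}$, and it is essentially correct and is precisely what one expects the cited proof to look like.

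There is, however, a small slip in step~(ii) that is worth cleaning up, especially since you emphasise it. From
\[
\{|H_{m}f|>N+\eta\}\subset\{|H_{m}f_{j}|\ge N\}\cup\{|H_{m}(f-f_{j})|>\eta\},
\]
after letting $j\to\infty$ you obtain $\mes\{|H_{m}f|>N+\eta\}\le\varepsilon_{0}/3$ for every $\eta>0$. Letting $\eta\downarrow 0$ then yields $\mes\{|H_{m}f|>N\}\le\varepsilon_{0}/3$, because $\bigcup_{\eta>0}\{|H_{m}f|>N+\eta\}=\{|H_{m}f|>N\}$, \emph{not} $\{|H_{m}f|\ge N\}$. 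So your final sentence has it backwards: the argument closes cleanly if you define $F_{N}$ with the \emph{strict} superlevel set $\{|H_{m}f|>N\}$; steps~(i) and~(iii) go through unchanged (in~(iii) simply choose $k$ so large that $\delta'\nu_{k}-N>N$). With this cosmetic fix your proof is complete.
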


The proof of Lemma \ref{GGT} can be found in \cite{GGT}.

\begin{lemma}
\label{GGT2}Let $L_{\Phi }\left( \mathbb{I}^{d}\right) $ be an Orlicz space
and let $\varphi :[0,\infty )\rightarrow \lbrack 0,\infty )$ be measurable
function with condition $\varphi \left( x\right) =o\left( \Phi \left(
x\right) \right) $ as $x\rightarrow \infty .$ Then there exists Orlicz space 
$L_{\omega }\left( \mathbb{I}^{d}\right) $, such that $\omega \left(
x\right) =o\left( \Phi \left( x\right) \right) \,\,$ as $x\rightarrow \infty 
$, and $\omega \left( x\right) \geq \varphi \left( x\right) $ for $x\geq
c\geq 0.$
\end{lemma}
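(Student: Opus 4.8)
The plan is to produce a Young function $\omega$ that fits into the ``gap'' between $\varphi$ and $\Phi$: eventually larger than $\varphi$, yet still $o(\Phi)$. The intuition is that $\varphi=o(\Phi)$ means there is room to spare multiplicatively, and one should take $\omega$ of roughly the size of the geometric mean $\sqrt{\delta(x)\,\Phi(x)}$, where $\delta(x):=\sup_{t\ge x}\varphi(t)/\Phi(t)$; such a size satisfies $\varphi(x)\le\delta(x)\Phi(x)\le\omega(x)$ once $\delta(x)\le 1$, and $\omega(x)/\Phi(x)=\sqrt{\delta(x)}\to0$. All the real work is to realize a function of this order by an honest Young function.

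First I would record elementary facts and reductions. A Young function is convex with $\Phi(0)=0$, hence nondecreasing, tending to $+\infty$, with $\Phi(x)/x$ nondecreasing and $\Phi(x)/x\to\infty$; consequently its inverse $\Phi^{-1}$ (for large arguments) is concave and sublinear, with $\Phi^{-1}(y)/y$ nonincreasing and $\to0$. Replacing $\varphi$ by $x\mapsto\sup_{0\le t\le x}\varphi(t)$ only enlarges it and, since the old $\varphi$ cannot exceed $\varepsilon\Phi(x)$ for large $x$ while $\Phi$ is nondecreasing and unbounded, the enlargement is still $o(\Phi)$; thus we may assume $\varphi$ nondecreasing. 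If $\varphi$ is eventually bounded the statement is easy, because some Young function is already $o(\Phi)$ (e.g.\ a convexification $\int_0^x t^{-1}\sqrt{t\Phi(t)}\,dt$ of $\sqrt{x\Phi(x)}$) and every superlinear function eventually dominates a constant; so assume also $\varphi(x)\to\infty$. Finally $\delta$ is nonincreasing, $\delta(x)\to0$, and $\varphi(x)\le\delta(x)\Phi(x)$.

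Now for the construction. I would take $\omega$ to be piecewise linear and convex, with breakpoints $x_k:=\Phi^{-1}(\lambda_k)$ for a sequence $\lambda_k\uparrow\infty$ chosen with bounded ratios $\lambda_{k+1}/\lambda_k$. Then $\varphi(x_{k+1})/\Phi(x_k)=(\varphi(x_{k+1})/\Phi(x_{k+1}))\,(\lambda_{k+1}/\lambda_k)\to0$, so there is a nonincreasing $\varepsilon_k\downarrow0$ with $\varphi(x_{k+1})\le\varepsilon_k\Phi(x_k)$; moreover $\varepsilon_k$ can be kept above a slowly decaying floor depending only on $\Phi$, so that the admissible window $I_k:=[\varphi(x_{k+1}),\varepsilon_k\Phi(x_k)]$ for the value $\omega(x_k)$ is wide compared with the breakpoint spacing $x_{k+1}-x_k$ — this is where the sublinearity of $\Phi^{-1}$ enters. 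One then picks values $v_k\in I_k$ for which the broken line through the points $(x_k,v_k)$ is convex with slopes increasing to $+\infty$; this threading is possible exactly because of the width-versus-spacing estimate. Finally I modify $\omega$ on $[0,x_1]$, say to a multiple of $x^2$ matched at $x_1$, so that $\omega(0)=0$ and $\omega(u)/u\to0$ as $u\to0$, the junction remaining convex by superlinearity.

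With $\omega$ built, the three required properties are immediate: $\omega$ is convex, superlinear and satisfies $\omega(u)/u\to0$ at $0$, hence is a Young function and $L_{\omega}(\mathbb{I}^{d})$ is a genuine Orlicz space; for $x\in[x_k,x_{k+1}]$ one has $\omega(x)\ge\omega(x_k)=v_k\ge\varphi(x_{k+1})\ge\varphi(x)$ since $\omega$ is increasing and $\varphi$ nondecreasing, so $\omega\ge\varphi$ for $x\ge x_1$; and for $x\in[x_k,x_{k+1}]$, $\omega(x)\le v_{k+1}\le\varepsilon_{k+1}\Phi(x_{k+1})\le(\lambda_{k+1}/\lambda_k)\varepsilon_k\Phi(x_k)\le C\varepsilon_k\Phi(x)$, so $\omega(x)/\Phi(x)\le C\varepsilon_k\to0$, i.e.\ $\omega(x)=o(\Phi(x))$. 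The main obstacle is precisely the balancing act inside the construction: the breakpoints must be dense enough (bounded ratios $\lambda_{k+1}/\lambda_k$) to force $\omega=o(\Phi)$ no matter how fast $\Phi$ grows, while the windows $I_k$ must stay wide relative to the spacing so the slopes can be driven to infinity no matter how slowly $\varphi$ grows; arranging both at once, via the sublinearity of $\Phi^{-1}$, is the technical heart, and the rest is routine bookkeeping.
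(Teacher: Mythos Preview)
The paper does not actually prove this lemma: immediately after the statement it says ``The proof of Lemma~\ref{GGT2} can be found in \cite{GGT2}'' and moves on. So there is no argument in the present paper to compare your proposal against; the authors simply import the result from G\'at--Goginava--Tkebuchava, \emph{J.\ Math.\ Anal.\ Appl.}\ \textbf{323} (2006), 535--549.

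As to your outline itself, the strategy is sound and standard: sandwich a Young function between $\varphi$ and $\Phi$ using that $\varphi/\Phi\to 0$ leaves multiplicative room, realized via a piecewise-linear convexification on breakpoints $x_k=\Phi^{-1}(\lambda_k)$. Two remarks. First, in the heuristic you presumably mean $\sqrt{\delta(x)}\,\Phi(x)$ (the geometric mean of $\varphi$ and $\Phi$, up to the replacement $\varphi\leadsto\delta\Phi$), not literally $\sqrt{\delta(x)\Phi(x)}$; the subsequent inequalities you write confirm that intended reading. Second, the step you flag as ``the technical heart''---choosing $v_k\in I_k$ so that the broken line is convex with slopes $\nearrow\infty$---is asserted but not carried out. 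It can be done (e.g.\ fix $\lambda_k=2^k$, set $\eta_k=\max\{\delta(x_k),k^{-1}\}$ and $v_k=\sqrt{\eta_k}\,\lambda_k$; then the slow lower bound $\eta_k\ge k^{-1}$ together with $x_{k+1}-x_k\le x_{k+1}=o(\lambda_{k+1})$ forces the slopes to increase to infinity, while $v_k\ge\eta_k\lambda_k\ge\delta(x_k)\Phi(x_k)\ge\varphi(x_{k+1})/2$ handles the lower bound after a harmless factor), but as written your proposal stops at the point where the real work begins. If you intend this as a full proof rather than a sketch, that paragraph needs to be expanded with explicit choices and the convexity/slope verification spelled out.
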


The proof of Lemma \ref{GGT2} can be found in \cite{GGT2}.

Set $m_{*}:=\lfloor m/2\rfloor$ (the lower integer part of $m/2$) 
\begin{equation*}
\widetilde{m}:=\left\lfloor \frac{l_{p_{m_{*}}-1}}{16}-2^{15}\right\rfloor
,J_{m}:=\left[ \frac{1}{2^{m+1}},\frac{1}{2^{m+1}}+\frac{1}{2^{m+\widetilde{m%
}}}\right) ,
\end{equation*}%
\begin{equation*}
J:=\bigcup\limits_{m=m_{0}}^{\infty }J_{m},m_{0}:=\inf \left\{
m:\left\lfloor \frac{l_{p_{m_{*}}-1}}{16}-2^{15}\right\rfloor >1\right\},
\end{equation*}%
\begin{equation*}
\Omega _{n}:=\bigcup\limits_{m=n}^{2n}\left[ \frac{1}{2^{m+1}}+\frac{1}{2^{m+%
\widetilde{m}}},\frac{1}{2^{m}}\right) ,
\end{equation*}%
where $p_{n}:=2^{2n}+2^{2n-2}+\cdots +2^{0}$.

\begin{lemma}
\label{GG}For $x\in \Omega _{n}$ we have an estimation%
\begin{equation*}
\left\vert F_{p_{n}}\left( x\right) \right\vert \gtrsim \frac{1}{x}.
\end{equation*}
\end{lemma}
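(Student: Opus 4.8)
The plan is to fix $x\in\Omega_n$, say $x\in\left[2^{-m-1}+2^{-m-\widetilde m},2^{-m}\right)$ with $n\le m\le 2n$; then $2^m<1/x<2^{m+1}$, so it suffices to show $|F_{p_n}(x)|\gtrsim 2^m$. Put $y:=2^{m+1}x-1\in\left[2^{1-\widetilde m},1\right)$, so that $\log_2(1/y)\le\widetilde m$. On the interval $[2^{-m-1},2^{-m})$ one has $D_{2^l}(x)=2^l$ for $l\le m$ and $=0$ for $l\ge m+1$, $r_l(x)=1$ for $l<m$, $r_m(x)=-1$, and $w_{2^{m+1}q}(x)=w_q(y)$ (the $(m+1+i)$-th Rademacher function of $x$ is the $i$-th of $y$). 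Iterating $D_{2^A+c}=D_{2^A}+w_{2^A}D_c$ ($0\le c<2^A$) --- first to strip the binary digits of $j$ that are $\ge m+1$, then the lower ones --- gives, for every $j=2^{m+1}q+s$ with $0\le s<2^{m+1}$,
\begin{equation*}
D_j(x)=w_q(y)\bigl(2^m-|s-2^m|\bigr).
\end{equation*}
In particular $|D_j(x)|\le 2^m$ for all $j$, and the analogous computation at $y$ gives $|D_j(y)|\lesssim 1/y$ for all $j$.

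Substituting into $l_{p_n}F_{p_n}(x)=\sum_{j=1}^{p_n-1}D_j(x)/(p_n-j)$, noting that the binary digits of $p_n$ below $2^{m+1}$ are exactly those of $p_{m_*}$ (so $p_n\equiv p_{m_*}\pmod{2^{m+1}}$) and putting $q_0:=(p_n-p_{m_*})/2^{m+1}$, I split the sum at $j=q_02^{m+1}$:
\begin{equation*}
l_{p_n}F_{p_n}(x)=\underbrace{w_{q_0}(y)\sum_{i=1}^{p_{m_*}-1}\frac{2^m-|(p_{m_*}-i)-2^m|}{i}}_{\Sigma_1}+\underbrace{\sum_{q=0}^{q_0-1}w_q(y)\sum_{s=0}^{2^{m+1}-1}\frac{2^m-|s-2^m|}{p_n-2^{m+1}q-s}}_{\Sigma_2}.
\end{equation*}
Every summand of $\Sigma_1$ has the common sign $w_{q_0}(y)$, and since $|p_{m_*}-2^m|<2^m/2$ one has $2^m-|(p_{m_*}-i)-2^m|>2^m/4$ for $1\le i\le 2^{m-2}$; hence $|\Sigma_1|\ge\frac{2^m}{4}\sum_{i=1}^{2^{m-2}}\frac{1}{i}\gtrsim m\,2^m\gtrsim 2^m l_{p_n}$, the last step because $n\le m\le 2n$ forces $m\asymp l_{p_n}$.

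For $\Sigma_2$ put $r=q_0-q\in[1,q_0]$ in the $q$-th block; the denominator is $2^{m+1}r+(p_{m_*}-s)$ with $|p_{m_*}-s|<2^{m+1}$, and since $\sum_{s=0}^{2^{m+1}-1}\bigl(2^m-|s-2^m|\bigr)=4^m$ the inner sum equals $2^m/(2r)+O(2^m/r^2)$, whence $\Sigma_2=\frac{2^m}{2}\sum_{r=1}^{q_0}\frac{w_{q_0-r}(y)}{r}+O(2^m)$. Here the crude bounds $\bigl|\sum_r w_{q_0-r}(y)/r\bigr|\le\sum_r 1/r$ and the one coming from $|D_j(y)|\lesssim 1/y$ are both too weak (the latter is exponentially large in $m$), so I would group the index $r$ into dyadic blocks $[2^k,2^{k+1})$: on each such block the partial sums of $w_{q_0-r}(y)$ are a difference of two Walsh--Dirichlet kernels at $y$, hence $\lesssim 1/y$ in modulus, and Abel summation yields a per-block bound $\lesssim\min(1,1/(y2^k))$; summing over $k$ gives $\bigl|\sum_{r=1}^{q_0}w_{q_0-r}(y)/r\bigr|\lesssim\log(1/y)\le\widetilde m$. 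Since $\widetilde m<l_{p_{m_*}-1}/16\le l_{p_n}/16$, this yields $|\Sigma_2|\lesssim\frac{1}{16}2^m l_{p_n}+O(2^m)$; combining with the bound for $|\Sigma_1|$ and dividing by $l_{p_n}$ gives $|F_{p_n}(x)|\gtrsim 2^m\asymp 1/x$.

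I expect the tail $\Sigma_2$ to be the main obstacle: one must extract genuine cancellation from the Walsh sum $\sum_r w_{q_0-r}(y)/r$ and then make the absolute constant in the resulting estimate strictly smaller than the one in the lower bound for $|\Sigma_1|$. This is exactly what the definition of $\widetilde m$ is tailored to --- the factor $1/16$ forces the $\Sigma_2$-constant to lose against the $\Sigma_1$-constant, while the subtracted $2^{15}$, together with the choice of $m_0$, guarantees that $l_{p_{m_*}-1}$ (hence $l_{p_n}$) is so large that the residual $O(2^m)$ terms are negligible against $2^m l_{p_n}$. The use of $\Omega_n$ rather than the whole dyadic interval $[2^{-m-1},2^{-m})$ serves only to keep $y=2^{m+1}x-1\ge 2^{1-\widetilde m}$, i.e. $\log(1/y)\lesssim\widetilde m$.
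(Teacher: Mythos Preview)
The paper does not supply a proof of this lemma; it simply records that ``the proof of Lemma \ref{GG} can be found in \cite{GGT}.'' So there is no in-paper argument to compare yours against.

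Your argument is sound. The identity $D_{2^{m+1}q+s}(x)=w_q(y)\bigl(2^m-|s-2^m|\bigr)$ for $x\in[2^{-m-1},2^{-m})$, $y=2^{m+1}x-1$, $0\le s<2^{m+1}$ is correct, and the decomposition of $l_{p_n}F_{p_n}(x)$ into a ``last block'' $\Sigma_1$ (all summands of one sign) and a tail $\Sigma_2$ is the natural one. Tracking constants explicitly: from $|p_{m_*}-2^m|<2^{m}/2$ one gets $|\Sigma_1|\ge\tfrac{\ln 2}{4}(m-2)\,2^{m}$; the dyadic-block Abel summation gives the clean bound
\[
\Bigl|\sum_{r=1}^{q_0}\frac{w_{q_0-r}(y)}{r}\Bigr|\le l+3\qquad\bigl(y\in[2^{-l-1},2^{-l})\bigr),
\]
and $y\ge 2^{1-\widetilde m}$ forces $l\le\widetilde m-2$. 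Since $\widetilde m\le l_{p_{m_*}-1}/16-2^{15}\le(m\ln2+2)/16-2^{15}$, one obtains $|\Sigma_2|\le\tfrac{\ln2}{32}\,m\,2^{m}-C\,2^{m}$ with a large positive $C$; the inequality $\tfrac{\ln2}{4}>\tfrac{\ln2}{32}$ then yields $|\Sigma_1|-|\Sigma_2|\gtrsim m\,2^{m}\asymp l_{p_n}2^{m}$, hence $|F_{p_n}(x)|\gtrsim 2^{m}\asymp 1/x$. The role you assign to the constants $1/16$ and $2^{15}$ in the definition of $\widetilde m$ is exactly right: the first guarantees the tail constant loses to the main-term constant, and the second absorbs all $O(2^{m})$ remainders.

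This is almost certainly the same strategy as in the cited source \cite{GGT}, since the parameters $p_n$, $\widetilde m$, $\Omega_n$ are lifted verbatim from there and are visibly tuned to make precisely this main-term/tail comparison succeed.
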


The proof of Lemma \ref{GG} can be found in \cite{GGT}.

\section{Proof of the Theorems\newline
}

\begin{proof}[Proof of Theorem \protect\ref{logest1}]
We apply the following particular case of the Marcinkiewicz interpolation
theorem \cite{E}. Let $T:L_{1}\left( \mathbb{I}^{1}\right) \rightarrow
L_{0}\left( \mathbb{I}^{1}\right) $ be a quasilinear operator of weak type $%
(1,1)$ and of type $\left( \alpha ,\alpha \right) $ for some $1<\alpha
<\infty $ at the same time, i. e.

\begin{eqnarray}
&&\text{mes}\left\{ x\in \mathbb{I}^{1}:\left\vert T\left( f,x\right)
\right\vert >y\right\}  \label{T1} \\
&\lesssim &\frac{1}{y}\int\limits_{\mathbb{I}^{1}}\left\vert f\left(
x\right) \right\vert dx;\,\,\,\,\forall f\in L_{1}\left( \mathbb{I}%
^{1}\right) \,\,\forall y>0;  \notag
\end{eqnarray}

and 
\begin{equation}
\,\,\,\,\,\left\Vert Tf\right\Vert _{L_{\alpha }\left( \mathbb{I}^{1}\right)
}\lesssim \,\,\left\Vert f\right\Vert _{L_{\alpha }\left( \mathbb{I}%
^{1}\right) },\,\,\,\forall f\in L_{\alpha }\left( \mathbb{I}^{1}\right) .
\end{equation}%
Then%
\begin{eqnarray}
&&\int\limits_{\mathbb{I}^{1}}\left\vert T\left( f,x\right) \right\vert \ln
^{\beta }\left\vert T\left( f,x\right) \right\vert dx\, \\
&\lesssim &\int\limits_{\mathbb{I}^{1}}\left\vert f\left( x\right)
\right\vert \ln ^{\beta +1}\left\vert f\left( x\right) \right\vert
dx+1\,,\,\,\,\forall \beta \geq 0.  \notag
\end{eqnarray}

In \cite{GGTATA} it is proved that for any $f\in L_{1}\left( \mathbb{I}%
^{1}\right) $ the operator $f\ast F_{n}$ has weak type (1,1) , i. e.%
\begin{equation}
\left\Vert f\ast F_{n}\right\Vert _{weak\_L_{1}\left( \mathbb{T}^{1}\right)
}\lesssim \left\Vert f\right\Vert _{L_{1}\left( \mathbb{T}^{1}\right) }.
\end{equation}

On the other hand, it is easy to prove that the operator $f\ast G_{n}$ has
type (1,1), i.e.%
\begin{equation}
\left\Vert f\ast G_{n}\right\Vert _{L_{1}\left( \mathbb{T}^{1}\right)
}\lesssim \left\Vert f\right\Vert _{L_{1}\left( \mathbb{T}^{1}\right) }.
\label{strong}
\end{equation}

From (\ref{T1})-(\ref{strong}) we have ($B^{\prime }:=\left\{
s_{1},s_{2},...,s_{r^{\prime }}\right\} $)%
\begin{eqnarray}
&&\left\Vert \left( L_{n_{B}}\circ R_{n_{B^{\prime }}}\right) \left(
f\right) \right\Vert _{L_{1}\left( \mathbb{I}^{d}\right) }  \label{op} \\
&=&\left\Vert \left( R_{n_{s_{1}}}\circ \cdots \circ R_{n_{s_{r^{\prime
}}}}\circ L_{n_{l_{1}}}\circ \cdots \circ L_{n_{l_{r}}}\right) \left(
f\right) \right\Vert _{L_{1}\left( \mathbb{I}^{d}\right) }  \notag \\
&\lesssim &\cdots \lesssim \left\Vert \left( L_{n_{l_{1}}}\circ \cdots \circ
L_{n_{l_{r}}}\right) \left( f\right) \right\Vert _{L_{1}\left( \mathbb{I}%
^{d}\right) }  \notag \\
&\lesssim &1+\left\Vert \left\vert L_{n_{l_{2}}}\circ \cdots \circ
L_{n_{l_{r}}}\left( f\right) \right\vert \log \left\vert L_{n_{2}}\circ
\cdots \circ L_{n_{l_{r}}}\left( f\right) \right\vert \right\Vert
_{L_{1}\left( \mathbb{I}^{d}\right) }  \notag \\
&\lesssim &\cdots \lesssim 1+\left\Vert \left\vert L_{n_{l_{r}}}\left(
f\right) \right\vert \log ^{r-1}\left\vert L_{n_{l_{r}}}\left( f\right)
\right\vert \right\Vert _{L_{1}\left( \mathbb{I}^{d}\right) }  \notag \\
&\lesssim &1+\left\Vert \left\vert f\right\vert \log ^{r}\left\vert
f\right\vert \right\Vert _{L_{1}\left( \mathbb{I}^{d}\right) }.  \notag
\end{eqnarray}

Theorem \ref{logest1} is proved.
\end{proof}

By virtue of standart arguments (see \cite{Zy}) we can see the validity of
Theorem \ref{normconv}.

\begin{proof}[Proof of Theorem \protect\ref{normdiv}]
Let 
\begin{equation*}
Q\left( 2^{2n|B|}\right) \gtrsim 2^{2n|B|}\text{ \ for \ }n>n_{0}.
\end{equation*}%
By virtue of estimate (\cite{KR}, Ch. 2)%
\begin{equation*}
\left\Vert f\right\Vert _{L_{Q}\left( \mathbb{I}^{d}\right) }\leq
1+\left\Vert Q\left( |f|\right) \right\Vert _{L_{1}\left( \mathbb{I}%
^{d}\right) }.
\end{equation*}%
We can write%
\begin{eqnarray}
&&\left\Vert L_{p_{n}\left( B\right) }\circ R_{p_{n}\left( B^{\prime
}\right) }\left( \bigotimes\limits_{i\in B}\frac{D_{2^{2n+1}}}{2}\right)
\right\Vert _{L_{1}\left( \mathbb{I}^{d}\right) }  \label{1} \\
&\leq &\left\Vert L_{p_{n}\left( B\right) }\circ R_{p_{n}\left( B^{\prime
}\right) }\right\Vert _{L_{Q}\left( \mathbb{I}^{d}\right) \rightarrow
L_{1}\left( \mathbb{I}^{d}\right) }\left\Vert \bigotimes\limits_{i\in B}%
\frac{D_{2^{2n+1}}}{2}\right\Vert _{L_{Q}\left( \mathbb{I}^{d}\right) } 
\notag \\
&\leq &\left\Vert L_{p_{n}\left( B\right) }\circ R_{p_{n}\left( B^{\prime
}\right) }\right\Vert _{L_{Q}\left( \mathbb{I}^{d}\right) \rightarrow
L_{1}\left( \mathbb{I}^{d}\right) }  \notag \\
&&\times \left( 1+\left\Vert Q\left( \bigotimes\limits_{i\in B}\frac{%
D_{2^{2n+1}}}{2}\right) \right\Vert _{L_{1}\left( \mathbb{I}^{d}\right)
}\right)  \notag \\
&\lesssim &\left\Vert L_{p_{n}\left( B\right) }\circ R_{p_{n}\left(
B^{\prime }\right) }\right\Vert _{L_{Q}\left( \mathbb{I}^{d}\right)
\rightarrow L_{1}\left( \mathbb{I}^{d}\right) }\left( 1+\frac{1}{2^{2n|B|}}%
Q\left( 2^{2n|B|}\right) \right)  \notag \\
&\lesssim &\left\Vert L_{p_{n}\left( B\right) }\circ R_{p_{n}\left(
B^{\prime }\right) }\right\Vert _{L_{Q}\left( \mathbb{I}^{d}\right)
\rightarrow L_{1}\left( \mathbb{I}^{d}\right) }\frac{Q\left(
2^{2n|B|}\right) }{2^{2n|B|}}.  \notag
\end{eqnarray}

On the other hand,%
\begin{eqnarray}
&&L_{p_{n}\left( B\right) }\circ R_{p_{n}\left( B^{\prime }\right) }\left(
\bigotimes\limits_{i\in B}\frac{D_{2^{2n+1}}}{2};\mathbf{x}\right)
\label{pw} \\
&=&\frac{1}{2^{|B|}}\int\limits_{\mathbb{I}^{|B|}}\prod\limits_{i\in
B}D_{2^{2n+1}}\left( z_{i}\right) F_{p_{n}}\left( x_{i}\overset{\cdot }{+}%
z_{i}\right) d\mathbf{z}_{B}  \notag \\
&&\times \int\limits_{\mathbb{I}^{|B^{\prime }|}}\prod\limits_{j\in
B^{\prime }}G_{p_{n}}\left( x_{j}\overset{\cdot }{+}z_{j}\right) d\mathbf{z}%
_{B^{\prime }}  \notag \\
&=&\frac{1}{2^{|B|}}\int\limits_{\mathbb{I}^{|B|}}\prod\limits_{i\in
B}D_{2^{2n+1}}\left( z_{i}\right) F_{p_{n}}\left( x_{i}\overset{\cdot }{+}%
z_{i}\right) d\mathbf{z}_{B}  \notag \\
&=&\frac{1}{2^{|B|}}\prod\limits_{i\in B}\int\limits_{\mathbb{I}%
}D_{2^{2n+1}}\left( z_{i}\right) F_{p_{n}}\left( x_{i}\overset{\cdot }{+}%
z_{i}\right) dz_{j}  \notag \\
&=&\frac{1}{2^{|B|}}\prod\limits_{i\in B}S_{2^{2n+1}}\left(
F_{p_{n}};x_{i}\right)  \notag \\
&=&\frac{1}{2^{|B|}}\prod\limits_{i\in B}F_{p_{n}}\left( x_{i}\right) . 
\notag
\end{eqnarray}%
Consequently, from Lemma \ref{GG} we get%
\begin{eqnarray}
&&\left\Vert L_{p_{n}\left( B\right) }\circ R_{p_{n}\left( B^{\prime
}\right) }\left( \bigotimes\limits_{i\in B}\frac{D_{2^{2n+1}}}{2};\mathbf{x}%
\right) \right\Vert _{L_{1}\left( \mathbb{I}^{d}\right) }  \label{low0} \\
&=&\frac{1}{2^{|B|}}\prod\limits_{i\in B}\left\Vert F_{p_{n}}\left(
x_{i}\right) \right\Vert _{L_{1}\left( \mathbb{I}^{1}\right) }\gtrsim
n^{|B|}.  \notag
\end{eqnarray}

Combining (\ref{1}) and (\ref{low0}) we obtain%
\begin{equation}
\left\Vert L_{p_{n}\left( B\right) }\circ R_{p_{n}\left( B^{\prime }\right)
}\right\Vert _{L_{Q}\left( \mathbb{I}^{d}\right) \rightarrow L_{1}\left( 
\mathbb{I}^{d}\right) }\gtrsim \frac{2^{2n|B|}n^{|B|}}{Q\left(
2^{2n|B|}\right) }.  \label{main}
\end{equation}

The fact that 
\begin{equation*}
L_{Q}\left( \mathbb{I}^{d}\right) \nsubseteqq L\log ^{|B|}L\left( \mathbb{I}%
^{d}\right)
\end{equation*}%
is equalent to the condition%
\begin{equation*}
\underset{u\rightarrow \infty }{\lim \sup }\frac{u\log ^{|B|}u}{Q\left(
u\right) }=\infty .
\end{equation*}%
Thus, there exists $\left\{ u_{k}:k\geq 1\right\} $ such that%
\begin{equation*}
\lim\limits_{k\rightarrow \infty }\frac{u_{k}\log ^{|B|}u_{k}}{Q\left(
u_{k}\right) }=\infty ,u_{k+1}>u_{k},k=1,2,...,
\end{equation*}%
and a monotonically increasing sequence of positive integers $\left\{
r_{k}:k\geq 1\right\} $ such that%
\begin{equation*}
2^{2|B|r_{k}}\leq u_{k}<2^{2|B|\left( r_{k}+1\right) }.
\end{equation*}%
Then we have%
\begin{equation*}
\frac{2^{2r_{k}|B|}r_{k}^{|B|}}{Q\left( 2^{2r_{k}|B|}\right) }\gtrsim \frac{%
u_{k}\log ^{|B|}u_{k}}{Q\left( u_{k}\right) }\rightarrow \infty .
\end{equation*}%
Thus, from (\ref{main}) we conclude that

\begin{equation*}
\sup\limits_{n}\left\Vert L_{p_{n}\left( B\right) }\circ R_{p_{n}\left(
B^{\prime }\right) } \right\Vert _{L_{Q}\left( \mathbb{I}^{d}\right)
\rightarrow L_{1}\left( \mathbb{I}^{d}\right) }=\infty .
\end{equation*}

This completes the proof of Theorem \ref{normdiv} part a). Part b) follows
immediately from part a).
\end{proof}

\begin{proof}[Proof of Theorem \protect\ref{logest2}]
Set%
\begin{equation*}
\Omega :=\left\{ \mathbf{x\in }\mathbb{I}^{d}:\left\vert \left(
L_{n_{B}}\circ R_{n_{B^{\prime }}}\right) \left( f,\mathbf{x}\right)
\right\vert >\lambda \right\} .
\end{equation*}

Then from (\ref{T1})-(\ref{op}) we have 
\begin{eqnarray*}
&&\text{mes}\left\{ \mathbf{x\in }\mathbb{I}^{d}:\left\vert \left(
L_{n_{B}}\circ R_{n_{B^{\prime }}}\right) \left( f,\mathbf{x}\right)
\right\vert >\lambda \right\} \\
&=&\int\limits_{\mathbb{I}^{d}}\mathbf{1}_{\Omega }\left( \mathbf{x}\right) d%
\mathbf{x}=\int\limits_{\mathbb{I}^{d-1}}\left( \int\limits_{\mathbb{I}}%
\mathbf{1}_{\Omega }\left( \mathbf{x}\right) dx_{l_{1}}\right) d\mathbf{x}%
_{D\backslash \{l_{1}\}} \\
&\lesssim &\frac{1}{\lambda }\left\Vert \left( L_{n_{B\backslash
\{l_{1}\}}}\circ R_{B^{\prime }}\right) \left( f\right) \right\Vert
_{L_{1}\left( \mathbb{T}^{d}\right) } \\
&\lesssim &1+\left\Vert \left\vert f\right\vert \log ^{r-1}\left\vert
f\right\vert \right\Vert _{L_{1}\left( \mathbb{T}^{d}\right) }.
\end{eqnarray*}

Theorem \ref{logest2} is proved.
\end{proof}

By virtue of standart arguments (see \cite{Zy}) we can see the validity of
Theorem \ref{measureconv}.

\begin{proof}[Proof of Theorem \protect\ref{baire}]
By Lemma \ref{GGT} the proof of Theorem \ref{normdiv} will be complete if we
show that there exists sequences of integers $\{n_{k}:k\geq 1\}$ and $\{\nu
_{k}:k\geq 1\}$ increasing to infinity, and a sequence of functions $\{\xi
_{k}:k\geq 1\}$ from the unit ball $S_{Q}\left( 0,1\right) $ of Orlicz space 
$L_{Q}\left( \mathbb{I}^{d}\right) $, such that for all $k$%
\begin{equation}
\mes\{\mathbf{x}\in \mathbb{I}^{d}:\left\vert L_{p_{n_{k}}\left( B\right)
}\circ R_{p_{n_{k}}\left( B\right) }\left( \xi _{k};\mathbf{x}\right)
\right\vert >\nu _{k}\}\geq \frac{1}{8}.  \label{est0}
\end{equation}

First, we prove that%
\begin{eqnarray}
&&\mes\left\{ \mathbf{x}\in \mathbb{I}^{d}:\left\vert L_{p_{n}\left(
B\right) }\circ R_{p_{n}\left( B^{\prime }\right) }\left(
\bigotimes\limits_{i\in B}D_{2^{2n+1}};\mathbf{x}\right) \right\vert \gtrsim
2^{n\left( 2|B|-1\right) }\right\}  \label{est1} \\
&\gtrsim &\frac{n^{|B|-1}}{2^{n\left( 2|B|-1\right) }},\quad |B|>1.  \notag
\end{eqnarray}

From (\ref{pw}) and Lemma \ref{GG} we have%
\begin{eqnarray*}
&&\left\vert L_{p_{n}\left( B\right) }\circ R_{p_{n}\left( B^{\prime
}\right) }\left( \bigotimes\limits_{i\in B}D_{2^{2n+1}};\mathbf{x}\right)
\right\vert \\
&=&\prod\limits_{i\in B}\left\vert F_{p_{n}}\left( x_{i}\right) \right\vert
\\
&\gtrsim &\prod\limits_{j\in B}\frac{1}{x_{j}},x_{j}\in \Omega _{n}, \quad
j\in B.
\end{eqnarray*}%
Consequently,%
\begin{eqnarray*}
&&\mes\left\{ \mathbf{x}\in \mathbb{I}^{d}:\left\vert L_{p_{n}\left(
B\right) }\circ R_{p_{n}\left( B^{\prime }\right) }\left(
\bigotimes\limits_{i\in B}D_{2^{2n+1}};\mathbf{x}\right) \right\vert \gtrsim
2^{n\left( 2|B|-1\right) }\right\} \\
&\geq &\mes\left\{ \mathbf{x\in }\Omega _{n}^{|B|}\times \mathbb{I}%
^{|B^{\prime }|}:\prod\limits_{j\in B}\frac{1}{x_{j}}\gtrsim 2^{n\left(
2|B|-1\right) }\right\} \\
&=&\mes\left\{ \mathbf{x}_{B}\mathbf{\in }\Omega
_{n}^{|B|}:x_{l_{1}}\lesssim \frac{1}{2^{n\left( 2|B|-1\right)
}\prod\limits_{j\in B\backslash \{l_{1}\}}x_{j}}\right\} \\
&=&\sum\limits_{m_{B}=n\left( B\right) }^{2n\left( B\right) }\mes\Biggl\{ %
x_{j}\mathbf{\in }\left[ \frac{1}{2^{m_{j}+1}}+\frac{1}{2^{m_{j}+\widetilde{m%
}_{j}}},\frac{1}{2^{m_{j}}}\right) ,j\in B: \\
&& x_{l_{1}}\lesssim \frac{1}{2^{n\left( 2|B|-1\right) }\prod\limits_{j\in
B\backslash \{l_{1}\}}x_{j}}\Biggr\} \\
&\gtrsim &\sum\limits_{m_{B\backslash \{l_{1}\}}=n\left( B\backslash
\{l_{1}\}\right) }^{2n\left( B\backslash \{l_{1}\}\right)
}\sum\limits_{m_{l_{1}}=n\left( 2|B|-1\right) -\left( m_{l_{2}}+\cdots
m_{l_{r}}\right) }^{2n}\prod\limits_{j\in B}\frac{1}{2^{m_{j}}} \\
&\gtrsim &\prod\limits_{j\in B\backslash \{l_{1}\}}\sum\limits_{m_{j}=n}^{2n}%
\frac{1}{2^{m_{j}}}\sum\limits_{m_{l_{1}}=n\left( 2|B|-1\right) -\left(
m_{l_{2}}+\cdots m_{l_{r}}\right) }^{2n}\frac{1}{2^{m_{l_{1}}}} \\
&\gtrsim &\frac{1}{2^{n\left( 2|B|-1\right) }}\prod\limits_{j\in B\backslash
\{l_{1}\}}\sum\limits_{m_{j}=n}^{2n}\frac{2^{m_{j}}}{2^{m_{j}}} \\
&\gtrsim &\frac{n^{|B|-1}}{2^{n\left( 2|B|-1\right) }},\quad |B|>1.
\end{eqnarray*}

Here we also used that for $n(2|B|-1)-(m_{l_2}+\dots +m_{l_r})\le m_{l_1}
\le 2n$ we have 
\begin{eqnarray*}
x_{l_1} \le \frac{1}{2^{m_{l_1}}} \le \frac{2^{m_{l_2}+\dots +m_{l_r}}}{%
2^{n(2|B|-1)}} \le \frac{1}{2^{n(2|B|-1)} x_{l_2}\cdots x_{l_r}}.
\end{eqnarray*}

Hence (\ref{est1}) is proved.

From the condition of the theorem we write 
\begin{equation*}
\liminf_{u\rightarrow \infty }\frac{Q(u)}{u\log ^{|B|-1}u}=0.
\end{equation*}%
Consequently, there exists a sequence of integers $\{n_{k}:k\geq 1\}$
increasing to infinity, such that

\begin{equation}
\lim_{k\rightarrow \infty }\frac{Q(2^{2n_{k}|B|})}{2^{2n_{k}|B|}n_{k}^{|B|-1}%
}=0,\quad \frac{Q(2^{2n_{k}|B|})}{2^{|B|\left( 2n_{k}+1\right) }}\geq
1,\quad \forall k.  \label{cond1}
\end{equation}

From (\ref{est1}) we have 
\begin{eqnarray*}
&&\mes\left\{ \mathbf{x}\in \mathbb{I}^{d}:\left\vert L_{p_{n_{k}}\left(
B\right) }\circ R_{p_{n_{k}}\left( B^{\prime }\right) }\left(
\bigotimes\limits_{i\in B}D_{2^{2n_{k}+1}};\mathbf{x}\right) \right\vert
\gtrsim 2^{n_{k}\left( 2|B|-1\right) }\right\} \\
&\gtrsim &\frac{n_{k}^{|B|-1}}{2^{n_{k}\left( 2|B|-1\right) }}.
\end{eqnarray*}%
Then by the virtue of Theorem \ref{Garsia} there exists $E_{1},...,E_{r}\in 
\mathcal{E}$ and $\varepsilon _{1},...,\varepsilon _{r}=\pm 1$ such that 
\begin{eqnarray}
\mes\{\mathbf{x} &\in &\mathbb{I}^{d}:\left\vert
\sum\limits_{i=1}^{r_k}\varepsilon _{i}L_{p_{n_{k}}\left( B\right) }\circ
R_{p_{n_{k}}\left( B^{\prime }\right) }\left( \bigotimes\limits_{j\in
B}D_{2^{2n_{k}+1}};E_{i}\mathbf{x}\right) \right\vert  \label{low} \\
&>&2^{n_{k}\left( 2|B|-1\right) }\}>\frac{1}{8},  \notag
\end{eqnarray}%
where 
\begin{equation*}
r_k\sim \frac{2^{n_{k}\left( 2|B|-1\right) }}{n_{k}^{|B|-1}}.
\end{equation*}

Denote 
\begin{equation*}
\nu _{k}=\frac{2^{n_{k}\left( 4|B|-1\right) -1}}{r_{k}Q\left(
2^{2n_{k}|B|}\right) }
\end{equation*}%
and 
\begin{equation*}
\xi _{k}\left( \mathbf{x}\right) =\frac{2^{2|B|n_{k}-1}}{Q\left(
2^{2n_{k}|B|}\right) }M_{k}\left( \mathbf{x}\right) ,
\end{equation*}%
where 
\begin{equation*}
M_{k}\left( \mathbf{x}\right) =\frac{1}{r_{k}}\sum\limits_{i=1}^{r_{k}}%
\varepsilon _{i}\prod\limits_{j\in B}D_{2^{2n_{k}+1}}\left( E_{i}^{\left(
j\right) }x_{j}\right) ,
\end{equation*}%
\begin{equation*}
E_{i}:=\left( E_{i}^{\left( 1\right) },...,E_{i}^{\left( d\right) }\right)
\end{equation*}

Thus, from (\ref{low}) we obtain (\ref{est0}).

Finally, we prove that $\xi _{k}\in S_{Q}\left( 0,1\right) $. Since%
\begin{equation*}
\left\Vert M_{k}\right\Vert _{\infty }\leq 2^{|B|\left( 2n_{k}+1\right) },
\end{equation*}%
\begin{equation*}
\left\Vert M_{k}\right\Vert _{L_{1}\left( \mathbb{I}^{d}\right) }\leq 1,
\end{equation*}%
\begin{equation*}
\left\Vert \xi _{k}\right\Vert _{L_{Q}\left( \mathbb{I}^{d}\right) }\leq 
\frac{1}{2}\left[ \int\limits_{\mathbb{I}^{d}}Q\left( 2\left\vert \xi
_{k}\right\vert \right) +1\right] ,
\end{equation*}%
and%
\begin{equation*}
\frac{Q\left( u\right) }{u}<\frac{Q\left( u^{\prime }\right) }{u^{\prime }}%
,\left( 0<u<u^{\prime }\right)
\end{equation*}%
we can write

\begin{eqnarray*}
\Vert \xi _{k}\Vert _{L_{Q}(\mathbb{I}^{d})} &\leq &\frac{1}{2}\left[
1+\int\limits_{\mathbb{I}^{d}}Q\left( \frac{2^{2|B|n_{k}}\left\vert
M_{k}\left( x\right) \right\vert }{Q(2^{2|B|n_{k}})}\right) d\mathbf{x}%
\right] \\
&\leq &\frac{1}{2}\left[ 1+\int\limits_{\mathbb{I}^{d}}\frac{Q\left( \frac{%
2^{2|B|n_{k}}2^{|B|\left( 2n_{k}+1\right) }}{Q(2^{2|B|n_{k}})}\right) }{%
\frac{2^{2|B|n_{k}}2^{|B|\left( 2n_{k}+1\right) }}{Q(2^{2|B|n_{k}})}}\frac{%
2^{2|B|n_{k}}\left\vert M_{k}\left( \mathbf{x}\right) \right\vert }{%
Q(2^{2|B|n_{k}})}d\mathbf{x}\right] \\
&\leq &\frac{1}{2}\left[ 1+\int\limits_{\mathbb{I}^{d}}\frac{Q\left(
2^{2|B|n_{k}}\right) }{2^{2|B|n_{k}}}\frac{2^{2|B|n_{k}}\left\vert
M_{k}\left( \mathbf{x}\right) \right\vert }{Q(2^{2|B|n_{k}})}d\mathbf{x}%
\right] \\
&\leq &1.
\end{eqnarray*}

Hence, $\xi _{k}\in S_{Q}\left( 0,1\right) $, and Theorem \ref{baire} is
proved.
\end{proof}

The validity of Corollary \ref{measuredivergence} follows immediately from
Theorem \ref{baire} and Lemma \ref{GGT2}.

\end{document}